\documentclass[12pt]{article}
\usepackage{amsmath,amsfonts,amssymb,amsthm,amscd}
\title{Dividing lines in unstable  theories and  subclasses of
Baire~1 functions}
\date{\today}
\author{Karim Khanaki\thanks{Partially supported by IPM grant 99030117}\\Arak University of Technology}

\newtheorem{Theorem}{Theorem}[section]
\newtheorem{Proposition}[Theorem]{Proposition}
\newtheorem{Definition}[Theorem]{Definition}
\newtheorem{Remark}[Theorem]{Remark}
\newtheorem{Lemma}[Theorem]{Lemma}
\newtheorem{Corollary}[Theorem]{Corollary}
\newtheorem{Fact}[Theorem]{Fact}
\newtheorem{Example}[Theorem]{Example}
\newtheorem{Question}[Theorem]{Question}

\begin{document}
\maketitle

\begin{abstract}  We give a new characterization of $SOP$ (the strict order property)
in terms of the behaviour of formulas in any model of the theory
as opposed to having to look at the behaviour of indiscernible
sequences inside saturated ones. We refine  a theorem of Shelah,
namely a theory has $OP$ (the order property)
 if and only if it has $IP$ (the independence property) or  $SOP$,
 in several ways by characterizing various
notions in functional analytic style.  We point out some
connections between dividing lines in first order theories and
subclasses of Baire 1 functions, and give new characterizations
of some classes and new classes of first order theories.
\medskip

\end{abstract}

\section{Introduction} \label{}
This paper aims to continue a new approach to Shelah stability
theory (in classical logic), which was followed in \cite{K3},
\cite{KP}. This approach is based on the fact that the study of
the model-theoretic properties of formulas in `models' instead of
only these properties in `theories' develops a sharper stability
theory and establishes important links between model theory and
other areas of mathematics, such as functional analysis. These
links lead to new results, in both model theory and functional
analysis, as well as  better understanding of the known results.

Let us give the background and our own point of view. In the 70's
Saharon Shelah developed local (formula-by-formula) stability
theory and combinatorial properties of formulas and used them to
gain global properties of theories. The independence property
 and the strict order property of a formula for a `theory' were
introduced in 1971 in \cite{Sh}. It is quite natural to try to
develop local  stability theory for formulas in  `models' instead
of only theories. Such a theory was developed in
\cite{Pillay-dimension}, \cite{Sh3}, \cite{BU} for the order
property and recently in \cite{K3} and \cite{KP} for the
independence property. In \cite{K3}, even a further step was
taken and the strict order property was studied and  a
connection  between a theorem of Shelah and an important theorem
in functional analysis was discovered.
  {\em What is interesting  is that some model-theoretic notions appeared
independently in topology and function theory, and moreover
various characterizations yield, via routine translations, the
characterization of $NOP/NIP/NSOP$   in a model $M$ or set $A$,
and some important theorems in model theory have twins there.}

Recall that  in \cite{Sh} Shelah introduced the strict order
property as complementary to the independence property: 

\medskip\noindent {\bf Shelah's Theorem\footnote{In this article, when we refer to Shelah's theorem, we mean this theorem.}:} (\cite[Theorem~II.4.7]{Shelah}) A complete first order theory has the order property ($OP$) if and only if it has the
independence property  ($IP$) or the strict order property
($SOP$).

\medskip\noindent
Later many classes
of independent $NSOP$ theories, such as simple and  $NSOP_n$,
were found.  In \cite{K3}, it is shown that there is a
correspondence between Shelah's theorem above and the well known
compactness theorem of Eberlein and \v{S}mulian.  In the current
paper, we complete some results of \cite{K3} and give a new
characterization of $SOP$ for classical logic.  In fact, the
correspondence mentioned above is completed in this article. {\em
What is substantial  is that there are  connections between
classification in model theory and classification of Baire class
1 functions  which lead to a better understanding of both of
these topics.}

It is worth recalling more historical points. Stability in a model
is not a new notion. In \cite{KM}, \cite{Pillay-dimension},
\cite{Sh3} and \cite{Iovino} this notion was studied in the
various contexts. (Although, the work of Krivine--Maurey
\cite{KM} is about the stability of the formula $\|x+y\|$ inside
a fixed Banach space and not the models of its theory.)  In
\cite{GL} some variants of $NOP/NIP/NSOP$ in a {\em type} were
defined and a local version of Shelah's theorem was proved.
Recently, in \cite{Ibarlucia}, \cite{K3} and \cite{Simon} the
connection between $NIP$ and functional analysis was noticed. The
notion ``$NIP$ of $\phi(x,y)$ in a model"  was introduced in
\cite{K3}.  We emphasize that our aim, approach and results in
\cite{K3}, \cite{KP} and the present paper are different from the
previous works.  In fact, the crucial idea in the paper is  to
study the  model theoretic properties of \textit{theories} by
\textit{studying model theoretic properties of formulas in
models.}

This paper is organized as follows. In the next section we first review some basic notions from functional analysis and  translate them into model theory.
We then  give a characterization of $SOP$
that does not involve indiscernible sequences and allows us to
relate the property to the behaviour of a class of Baire 1
functions (Proposition~\ref{NSOP=bounded variation} and
Remark~\ref{DBSC1} below).  We also refine Shelah's theorem 
(Theorem~\ref{strong NIP in a set} below) using a criteria for
formulas inside a model.
  We remark some equivalences on $NIP$ in the terms of function  spaces (Proposition~\ref{NIP=DBSC} and Remark~\ref{Chaatit})
   and define the  notion ``$NSOP$  in a model" (Definition~\ref{SOP in model}).
 In Section 3,  we point out connections between some dividing lines in
first order theories and subclasses of Baire class 1 functions (Remarks~\ref{Eberlein-Shelah}, \ref{SCP-NSOP} and Proposition~\ref{Shelah-like}).

\section{Model theory and function spaces}

We work in classical  ($\{0,1\}$-valued) model theory context,
although similar results are valid in the continuous logic
framework. Our model theory notation is standard, and a text such
as \cite{Shelah} will be sufficient background for the model
theory part of the paper. For the function theory part, read this
paper with \cite{K3} and \cite{KP} in your hand. We frequently
switch from model theory to function theory and vice versa, so we
provide some necessary functional analysis background.

First we recall some definitions and facts from functional
analysis  and then translate them into model theoretic language.

\subsection*{Function spaces}
We give definitions of the function spaces with which we shall be
concerned, with some of elementary relations between them.

Let  $X$ be a set and $A$ a subset of $\Bbb R^X$.  The topology of
{\em pointwise convergence} on $A$ is that inherited from the
usual product topology on $\Bbb R^X$; namely  the coarsest
topology on $A$ for which the map 
that sends each $f\in A$ to  $f(x)$
 is continuous for every $x\in X$.

Let $B$ be  some collection of real valued functions on $X$,
containing $A$. $A$ is said to be {\em  relatively compact (or
precompact) in $B$} if the closure $cl_B(A)$ of $A$ in $B$ is
compact. In this case $cl_B(A)$ is closed (and compact) in the
space $\Bbb R^X$, so in particular it implies that the closure of
$A$ in $\Bbb R^X$ is contained in $B$.

Recall that for a topological space $X$, $C(X)$ denotes the space
of all bounded continuous functions on $X$; it is a linear space
under pointwise addition.
 We can equip the space $C(X)$ with the {\em uniform norm topology},  the uniform metric defined by $d(f,g)=\sup_{x\in X}|f(x)-f(y)|$.
 The {\em  weak topology} on $C(X)$ is the coarsest topology such that every
 bounded linear functional on $C(X)$ is continuous.  So, $C(X)$ has three different topologies; namely
the topology of pointwise convergence, the uniform topology, and
the weak topology.

A well known fact in functional analysis states that for a compact
space $X$, the weak topology and the pointwise convergence
topology on norm-bounded subsets of $C(X)$ are the same. (See
Proposition 462E in \cite{Fremlin4}.)

 For a complete metric space $X$, a real-valued function $f$ on
$X$ is said to be of the first Baire class or Baire~1, if it is
the pointwise limit of a sequence $(f_n)$ of continuous functions
on $X$. This means that for each $\epsilon>0$ and each $x\in X$
there is a natural number $k$ such that $|f_n(x)-f(x)|<\epsilon$
for all $n\geq k$. The set of Baire 1 functions on $X$ is denoted
by $B_1(X)$.

 A real-valued function $f$ on a topological space $X$ is upper (resp.
lower) semi-continuous if and only if $\{x:f(x)\geq r\}$ (resp.
$\{x:f(x)\leq r\}$) is closed for every real number $r$. A
function $f$ is called semi-continuous if is either upper or lower
semi-continuous. A known classical theorem, due to Baire, asserts
that:

 \begin{Fact}[\cite{H}, p. 274]  \label{Baire} A real-valued function $f$ on
a complete metric space $X$ is lower semi-continuous if and only
if there is a sequence $(f_n)$ of continuous functions such that
$f_1\leq f_2\leq \cdots$ and $(f_n)$ converges pointwise to $f$
(for short we write $f_n\nearrow f$).
\end{Fact}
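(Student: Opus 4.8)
The statement is a biconditional, and I would handle the two directions separately, with the forward implication (lower semicontinuity produces the sequence) carrying essentially all of the content.

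For the easy direction I would argue purely topologically, using neither the metric nor completeness. If $f_n \nearrow f$ with each $f_n$ continuous, then because the sequence is increasing we have $f(x) = \sup_n f_n(x)$ for every $x$, so for each real $r$,
\[
\{x : f(x) > r\} = \bigcup_n \{x : f_n(x) > r\}.
\]
Each set on the right is open by continuity of $f_n$, hence so is the union, and its complement $\{x : f(x) \leq r\}$ is closed. As $r$ is arbitrary, $f$ is lower semicontinuous.

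For the substantial direction I would use the inf-convolution (Moreau--Yosida) regularization. Assuming first that $f$ is bounded, I would set
\[
f_n(x) = \inf_{y \in X}\bigl(f(y) + n\, d(x,y)\bigr),
\]
and then verify four points in turn: (i) each $f_n$ is $n$-Lipschitz, hence continuous, by the triangle inequality; (ii) the sequence is nondecreasing in $n$, since raising the coefficient of $d(x,y)$ can only increase the infimum; (iii) $f_n(x) \leq f(x)$, by taking $y = x$; and (iv) the crucial pointwise convergence $f_n(x) \to f(x)$. For (iv) I would fix $x$ and $\epsilon > 0$, use lower semicontinuity to find $\delta > 0$ with $f(y) > f(x) - \epsilon$ on the ball $B(x,\delta)$, and split the infimum according to whether $d(x,y) < \delta$ or $d(x,y) \geq \delta$. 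On the near part the bound is immediate; on the far part one uses $f(y) + n\,d(x,y) \geq m + n\delta$, where $m$ is a lower bound for $f$, and this exceeds $f(x) - \epsilon$ once $n$ is large.

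The main obstacle is precisely the far part of step (iv): it requires $f$ to be bounded below, which a general real-valued lower semicontinuous function need not be. I would remove this assumption by composing with an increasing homeomorphism $\phi : \R \to (-1,1)$, say $\phi(t) = t/(1+|t|)$. Then $\phi \circ f$ is lower semicontinuous and bounded, so the construction above yields continuous $g_n \nearrow \phi \circ f$; a short argument using lower semicontinuity at $x$ shows each $g_n$ stays strictly inside $(-1,1)$, so that $f_n := \phi^{-1}\circ g_n$ is well defined, continuous, nondecreasing in $n$, and converges pointwise to $f$. I would note in passing that completeness of $X$ is not in fact needed for this argument; the hypothesis is inherited from the surrounding Baire-class-$1$ setting.
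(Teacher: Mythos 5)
The paper offers no proof of Fact~\ref{Baire} at all: it is stated as a known classical theorem with only a citation to Hausdorff's \emph{Set Theory}, so there is nothing internal to compare your argument against, and it must be judged on its own. On its own it is correct and complete. The route you take --- the easy direction via $\{f>r\}=\bigcup_n\{f_n>r\}$, and the substantial direction via Moreau--Yosida (inf-convolution) regularization $f_n(x)=\inf_{y\in X}\bigl(f(y)+n\,d(x,y)\bigr)$ for bounded $f$, followed by conjugation with an increasing homeomorphism $\mathbb{R}\to(-1,1)$ to remove boundedness --- is the standard classical argument, essentially the one going back to Baire and Hausdorff, i.e.\ essentially what the cited source contains. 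Your four verification steps are all sound, and you correctly flag the one genuinely delicate point of the reduction: that the regularized functions $g_n$ of $\phi\circ f$ stay \emph{strictly} inside $(-1,1)$, which requires lower semicontinuity of $\phi\circ f$ at the point (giving a ball on which $\phi\circ f>\phi(f(x)-\epsilon)$, and the bound $-1+n\delta$ off that ball), not merely the pointwise inequality $\phi\circ f>-1$. Your closing observation is also accurate and worth retaining: completeness of $X$ plays no role in either direction --- the equivalence holds on an arbitrary metric space --- and the hypothesis appears in the statement only because the surrounding Baire-class-1 framework of the paper assumes it.
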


 A real-valued function $f$ on $X$ is called a difference of
 bounded semi-continuous functions  (short $DBSC$) if
there exist bounded semi-continuous functions $F_1$ and $F_2$ on
$X$ with $f=F_1-F_2$. The class of such  functions is denoted by
$DBSC(X)$. Since every lower (or upper) semi-continuous function
is the limit of a monotone sequence of continuous functions, so
$DBSC(X)\subseteq B_1(X)$. It is a well known fact that, in
general, $DBSC(X)$ is a {\em proper} subclass of $B_1(X)$ (see
Remark~\ref{Chaatit} below). To summarize,  $C(X)\subsetneqq
DBSC(X)\subsetneqq B_1(X)\subsetneqq {\Bbb R}^X$. (More details
can be found in \cite{CMR}.)

We will see shortly, the order property corresponds to  $C(X)$ and
the strict order property has connection to
 $DBSC(X)$.

In this paper, typically $A$ will be a subset of $C(X)$, the set
of bounded continuous functions on $X$. Moreover, it suffices to
assume that $A$ is countable and $X$ is compact and Polish, i.e. a
separable completely metrizable topological space. Note that the
uniform closure of $A$ is contained in $C(X)$ but in general the
poinwise closure of $A$ is not contained in $C(X)$, or even in
$B_1(X)$.

\subsection*{Model theory translation}
We fix an $L$-formula $\phi(x,y)$, a complete   $L$-theory $T$ and
a subset $A$ of the monster model of $T$. We let $\tilde\phi(y, x)
= \phi(x, y)$. Let $X=S_{\tilde{\phi}}(A)$ be the space of
complete $\tilde{\phi}$-types on $A$, namely the Stone space of
ultrafilters on Boolean algebra generated by formulas $\phi(a,y)$
for $a\in A$.  Each formula $\phi(a,y)$ for  $a\in A$ defines a
function $\phi(a,y):X\to\{0,1\}$, which takes $q\in X$ to 1 if
$\phi(a,y)\in q$ and to 0 if $\phi(a,y)\notin q$. Note that $X$
is compact and these functions are {\em continuous}, and as $\phi$
is fixed we can identify this set of functions with $A$.\footnote{We should be more careful with this assertion; if the variables $y$ are just dummy	variables that play no role, we can not recover $A$ from $X$. This result is true for the space of full types, but not necessarily for just the $\tilde\phi$-types. However, for the sake of simplicity we continue to write $A\subseteq C(X)$.}  So, $A$
is a subset of all bounded continuous functions on $X$, denoted by
$A\subseteq C(X)$.  Just as we did above, one can define $B_1(X)$
and $DBSC(X)$.

 To summarize, for an
$L$-formula $\phi(x,y)$ and a subset $A$ of an $L$-structure $M$,
we can assume that $A$ is a subset of $\Bbb R^X$ where
$X=S_{\tilde{\phi}}(A)$ and $A$ has the topology of pointwise
convergence as above. Moreover, every $f\in A$ is continuous,
i.e. $A\subseteq C(X)$.

The only additional thing we need to remark on is the following
result (see \cite[Corollary~2.10]{K3} and
\cite[Proposition~2.2]{Pillay-Grothendieck}):

\begin{Fact}[Eberlein--Grothendieck Criterion] \label{EG} Let $(a_i)$ be a sequence in some model of $T$ and $\phi(x,y)$
a formula. Then the  following are equivalent:

\noindent (i) There is no any sequence $(b_j)$ such that
$\phi(a_i,b_j)$ holds iff $i<j$.

\noindent (ii) For any sequence $(b_j)$,
$\lim_i\lim_j\phi(a_i,b_j)=\lim_j\lim_i\phi(a_i,b_j)$ when the
limits on both sides  exist.

\noindent (iii) Every function in the closure of
$\{\phi(a_i,y):S_{\tilde{\phi}}(\{a_i\})\to\{0,1\}:i<\omega\}$ is
continuous.
\end{Fact}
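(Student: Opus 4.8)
The plan is to read this as the model-theoretic translation of Grothendieck's double-limit criterion, so I would split the proof into the combinatorial equivalence (i)$\Leftrightarrow$(ii) and the functional-analytic equivalence (ii)$\Leftrightarrow$(iii). First I set up the dictionary: writing $f_i = \phi(a_i,\cdot)\colon X\to\{0,1\}$ for $X = S_{\tilde\phi}(\{a_i\})$, each $f_i$ is continuous and lies in $C(X)$, and $X$ is compact. Each parameter $b$ determines a point $x_b = \mathrm{tp}_{\tilde\phi}(b/\{a_i\})\in X$ with $f_i(x_b)=\phi(a_i,b)$, and by saturation of the monster every point of $X$ is of the form $x_b$; hence sequences $(b_j)$ correspond exactly to sequences $(x_j)$ in $X$, and $\lim_i\lim_j\phi(a_i,b_j)$ and $\lim_j\lim_i\phi(a_i,b_j)$ are literally the two iterated limits $\lim_i\lim_j f_i(x_j)$ and $\lim_j\lim_i f_i(x_j)$.

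For (i)$\Leftrightarrow$(ii), the direction $\neg$(i)$\Rightarrow\neg$(ii) is a direct computation: if $\phi(a_i,b_j)\iff i<j$, then for fixed $i$ the value is eventually $1$ in $j$, so $\lim_i\lim_j=1$, whereas for fixed $j$ it is eventually $0$ in $i$, so $\lim_j\lim_i=0$. For $\neg$(ii)$\Rightarrow\neg$(i), assume both iterated limits exist and are unequal; since the values are in $\{0,1\}$ I may assume (after discarding finitely many indices) that $\lim_j\phi(a_i,b_j)=1$ for every $i$ and $\lim_i\phi(a_i,b_j)=0$ for every $j$. I then build increasing index sequences by an alternating recursion: having chosen $i_1<\cdots<i_n$ and $j_1<\cdots<j_n$, I first pick $j_{n+1}$ so large that $\phi(a_{i_k},b_{j_{n+1}})=1$ for all $k\le n$ (possible by the first eventual value), and then pick $i_{n+1}$ so large that $\phi(a_{i_{n+1}},b_{j_l})=0$ for all $l\le n+1$ (possible by the second). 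A bookkeeping check shows $\phi(a_{i_k},b_{j_l})=1\iff k<l$, i.e.\ a strict half-graph on the subsequence $(a_{i_k})$, which after reindexing witnesses the failure of (i) (as usual in the Eberlein--\v{S}mulian setting, the combinatorial configuration is extracted on a subsequence).

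For (ii)$\Leftrightarrow$(iii) I would invoke Grothendieck's criterion in its translated form. For $\neg$(iii)$\Rightarrow\neg$(ii): if some $g$ in the pointwise closure of $\{f_i\}$ is discontinuous at a point $x^*\in X$, then, using that $X$ is Polish and that the pointwise closure of a sequence in $C(X)$ is angelic, $g$ is the pointwise limit of a subsequence $f_{i_n}\to g$; discontinuity gives $x_m\to x^*$ with, say, $g(x^*)=0$ and $g(x_m)=1$ for all $m$. Realizing $(x_m)$ by parameters $(b_m)$ and using the subsequence $(a_{i_n})$, continuity of each $f_{i_n}$ yields $\lim_n\lim_m f_{i_n}(x_m)=\lim_n f_{i_n}(x^*)=g(x^*)=0$, while $\lim_m\lim_n f_{i_n}(x_m)=\lim_m g(x_m)=1$, so (ii) fails. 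For the converse $\neg$(ii)$\Rightarrow\neg$(iii), equivalently (iii)$\Rightarrow$(ii), I would run Grothendieck's interchange-of-limits argument: assuming every function in the pointwise closure is continuous and that both iterated limits exist, I take a pointwise cluster function $f$ of $(f_i)$ (continuous by (iii)) and a cluster point $x$ of $(x_j)$ in the compact space $X$, and use continuity of $f$ and of each $f_i$ to show both iterated limits equal $f(x)$.

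I expect the genuine obstacle to be exactly this last step, (iii)$\Rightarrow$(ii): it is the substantive half of Grothendieck's theorem and cannot be reduced to a one-line continuity argument, because the cluster point $x$ need not be an honest limit of $(x_j)$, so the interchange must be organized through a simultaneous diagonal/subnet argument and relies essentially on compactness of $X$ together with the angelic (sequential-closure) property of the pointwise closure of a bounded subset of $C(X)$. The extraction in (i)$\Leftrightarrow$(ii) is comparatively routine, its only delicate point being that the half-graph appears on a subsequence of $(a_i)$ rather than on the original indexing; I would address this by stating the combinatorial condition with subsequences understood, as is standard here.
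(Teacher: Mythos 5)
The paper never proves Fact~\ref{EG}: it is quoted from \cite{K3} and \cite{Pillay-Grothendieck}, so your attempt can only be measured against the standard Eberlein--Grothendieck argument. Measured that way, your decomposition is the right one, and your (iii)$\Rightarrow$(ii) step (cluster function plus cluster point plus continuity) is correct exactly as sketched --- in fact it is easier than you fear: no angelicity and no diagonal subnet argument are needed, only the facts that a cluster point of a convergent sequence equals its limit and that a continuous function sends a cluster point of $(x_j)$ to a cluster point of $(f(x_j))$. The genuine gap in that half is the opposite direction $\neg$(iii)$\Rightarrow\neg$(ii), where you invoke ``the pointwise closure of a sequence in $C(X)$ is angelic'' to extract a subsequence $f_{i_n}\to g$. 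That claim is false in general, and it fails precisely in the $IP$ situation this paper is concerned with: for the coordinate functions $f_n(x)=x(n)$ on $X=\{0,1\}^\omega$ (the random-graph formula, cf.\ Example~\ref{example3}) every cluster point of $(f_n)$ is discontinuous, yet no subsequence of $(f_n)$ converges pointwise at all, so no sequence from the set converges to any such $g$. Bourgain--Fremlin--Talagrand angelicity is available only once the closure is known to lie in $B_1(X)$, i.e.\ only in the direction where you do not need it; using it here assumes the very dichotomy ($NIP$ versus $IP$) that is at stake. The repair is the classical Grothendieck construction: you never need $f_{i_n}\to g$ everywhere, only agreement of $f_{i_n}$ with $g$ at $x^*$ and at the finitely many $x_m$ chosen so far (possible because $g$, being discontinuous, lies in the closure of every tail $\{f_i:i\ge k\}$), alternated with choices of $x_m$ witnessing the discontinuity; alternatively, split into cases by Rosenthal's $\ell^1$-dichotomy.

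The second gap is in $\neg$(ii)$\Rightarrow\neg$(i), and it is not cured by your ``subsequences understood'' convention. Discarding finitely many indices cannot interchange the two cases $\lim_i\lim_j=1,\ \lim_j\lim_i=0$ and $\lim_i\lim_j=0,\ \lim_j\lim_i=1$; they are genuinely asymmetric here, because the rows are constrained to the fixed sequence $(a_i)$ while the columns range over the monster. In the second case your recursion produces $\phi(a_{i_k},b_{j_l})$ iff $k\ge l$, the \emph{reverse} half-graph, which neither is nor yields the $i<j$ pattern. Concretely, let $M$ have universe $\{a_i:i<\omega\}\cup\{c_r:r<\omega\}$ with $\phi(a_i,c_r)$ iff $i\ge r$ and no other edges. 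Every trace $\{i:\phi(a_i,b)\}$ with $b$ in the monster is then a final segment of $\omega$ or empty, so \emph{no} subsequence of $(a_i)$ admits the $i<j$ pattern with any $(b_j)$ whatsoever: (i) holds under any reading. But $b_j:=c_j$ gives $\lim_i\lim_j\phi(a_i,b_j)=0\neq1=\lim_j\lim_i\phi(a_i,b_j)$, and $\phi(a_i,y)$ converges pointwise to a discontinuous function on $S_{\tilde\phi}(\{a_i\})$, so (ii) and (iii) both fail. Hence (i), even read with subsequences, is strictly weaker than (ii) and (iii), and the implication you are trying to prove is false as stated: the combinatorial condition must be symmetrized, forbidding both the $i<j$ and the $i>j$ patterns (equivalently, treating $\phi$ and $\neg\phi$ simultaneously), as the negation-invariance of (ii) and (iii) forces. (A similar interleaving example, with constant-zero rows inserted between the $f_{2k}$, shows that (ii) must also be read along subsequences, as your argument implicitly does.) This defect is inherited from the paper's loose formulation of the Fact, but a correct proof has to flag and fix it rather than pass it through a ``without loss of generality.''
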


\noindent {\em History} ~ The equivalence
(ii)~$\Leftrightarrow$~(iii) in the general case, i.e. for
real-valued functions on arbitrary compact spaces, is due to
Grothendieck \cite{Gro}, which  he says it is based on an idea of
Eberlein. 
Pillay \cite{Pillay-dimension} proved  the equivalence
(i)~$\Leftrightarrow$~(iii) and pointed out that these conditions
are equivalent to definability of  coheirs (see
\cite{Pillay-Grothendieck}). In
\cite{Iovino}, Iovino also provided a proof of  Fact~\ref{EG} for real-valued formulas.  

\subsection{A new characterization of $SOP$}
First, we recall some notions and facts.
 Let $\phi(x,y)$ be formula and $n$ a natural number.  We say that a formula $\psi(x_1,\ldots,x_n)$ is a $\phi$-$n$-formula  if it is of the forms
$\exists y\big(\bigwedge_{i\in E}\phi(x_i,y)\wedge\bigwedge_{j\in F}\neg\phi(x_j,y)\big)$  or  $\forall y\big(\bigvee_{i\in E}\phi(x_i,y)\vee\bigvee_{j\in F}\neg\phi(x_j,y)\big)$ where $E,F$ are disjoint subsets of $\{1,\ldots,n\}$.  
In this case, $\psi(x_1,\ldots,x_n)$ has $n$ free variables $x_1,\ldots,x_n$ and a bounded variable $y$.
If $M$ is a model of a theory  and $\bar a=(a_1.\ldots,a_n)\in M^n$, the $\phi$-$n$-type of  $\bar a$, denoted by $tp_{\phi,n}(\bar a)$, is the set of all $\phi$-$n$-formulas $\psi(\bar x)$ such that $\models\psi(\bar a)$.\footnote{Although these  notions  seems very restrictive and
unnatural,  they are very useful for proving the main theorem of this section, i.e., Theorem~\ref{strong NIP in a set} below.
Note that the notion $\phi$-$n$-type is completely different from the notion $\phi$-type we defined earlier.}

\begin{Definition}[\cite{Shelah}, Definition~I.2.3]  Let $T$ be a complete $L$-theory, $\phi(x,y)$ an $L$-formula, $N$ a number
and $(a_i)$ a sequence in some model. The sequence $(a_i)$ is a
{\em $\phi$-$N$-indiscernible sequence} (over the empty set) if
for each $i_1<\cdots<i_N<\omega$, $j_1<\cdots<j_N<\omega$,
$$\textrm{tp}_{\phi,N}(a_{i_1}\ldots a_{i_N})=\textrm{tp}_{\phi,N}(a_{j_1}\ldots
a_{j_N}).$$
\end{Definition}

\begin{Fact} \label{Ramsey}  Let $T$ be a complete $L$-theory, $\phi(x,y)$ an $L$-formula, $M$ a model of $T$, and $N$ a natural
number.

 \noindent
(i)  If $(a_i)$ is an {\em infinite} sequence in $M$, there is an
infinite  subsequence $(b_i)$ which is a $\phi$-$N$-indiscernible
sequence.

 \noindent (ii)  If $I\subset J$ are two (infinite) linear ordered sets and
$(a_i)_{i\in I}$ is an {\em infinite}  $\phi$-$N$-indiscernible
sequence in $M$, there is a sequence $(b_j)_{j\in J}$ (possibly in
an elementary extension of $M$) which is a
$\phi$-$N$-indiscernible sequence and $(b_j)_{j\in J}$ has
the same $\phi$-$N$-type as $(a_i)_{i\in I}$.
\end{Fact}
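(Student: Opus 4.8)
The plan is to prove part (i) by the infinite Ramsey theorem and part (ii) by a compactness argument; both are standard ``modelling''-type arguments once the right finiteness observation is in place.

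For (i), the key preliminary observation is that an $N$-tuple has only \emph{finitely many} possible $\phi$-types. Indeed, since $\phi(x,y)$ is a single fixed formula, the complete $\phi$-type $\textrm{tp}_\phi(c_1\ldots c_N)$ of an $N$-tuple over $\emptyset$ is determined by the finitely many truth values of the instances of $\phi$ obtained by substituting coordinates of the tuple into the variables of $\phi$; hence there are at most some fixed finite number of such types, depending only on $\phi$ and $N$. Then I would color the increasing $N$-tuples from $\omega$ by setting the color of $(i_1<\cdots<i_N)$ to be $\textrm{tp}_\phi(a_{i_1}\ldots a_{i_N})$. This is a finite coloring of $[\omega]^N$, so the infinite Ramsey theorem yields an infinite $H\subseteq\omega$ all of whose increasing $N$-tuples receive the same color. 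Re-indexing $(a_i)_{i\in H}$ as $(b_i)$ gives an infinite $\phi$-$N$-indiscernible subsequence, which is exactly (i).

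For (ii), let $p_N$ be the common $\phi$-type realized by every increasing $N$-tuple of the given $\phi$-$N$-indiscernible sequence $(a_i)_{i\in I}$. I would introduce new variables $(y_j)_{j\in J}$ and let $\Sigma$ be the set of formulas asserting, for each increasing $N$-tuple $j_1<\cdots<j_N$ from $J$, that $(y_{j_1},\ldots,y_{j_N})$ realizes $p_N$. The point is to verify that $\Sigma$ is finitely satisfiable: a finite subset $\Sigma_0$ mentions only finitely many indices $j_1<\cdots<j_m$ from $J$, and since $I$ is infinite I can choose $i_1<\cdots<i_m$ in $I$ and interpret $y_{j_k}$ as $a_{i_k}$; by $\phi$-$N$-indiscernibility of $(a_i)_{i\in I}$ every increasing $N$-subtuple of $(a_{i_1},\ldots,a_{i_m})$ realizes $p_N$, so this interpretation satisfies $\Sigma_0$. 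By compactness $\Sigma$ is realized by some $(b_j)_{j\in J}$ in an elementary extension of $M$, and this sequence is $\phi$-$N$-indiscernible by construction.

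The routine parts are the two appeals to Ramsey and compactness; the one step that deserves care is the finiteness of the set of $\phi$-types of $N$-tuples in part (i), since it is precisely what makes the coloring finite and hence Ramsey applicable. The fact that $\phi$ is a \emph{single} formula (rather than an infinite set) is what guarantees this, and everything else follows formally.
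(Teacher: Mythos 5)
Your proposal is correct and takes essentially the same route as the paper, which disposes of (i) by citing the infinite Ramsey theorem (Theorem~I.2.4 of Shelah's book) and of (ii) by citing the compactness theorem. You have simply filled in the standard details of those two citations --- the finiteness of the set of $\phi$-types of $N$-tuples (which makes the coloring finite) and the finite-satisfiability argument using the given indiscernible sequence as witnesses --- and both are carried out correctly.
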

\begin{proof} (i) follows from  (infinite)  Ramsey's theorem (see
  Theorem~I.2.4 of \cite{Shelah}) and (ii) follows from the
compactness theorem.
\end{proof}

\begin{Definition}[$SOP$ for a theory]  (i) Let $T$ be a complete $L$-theory, $\mathcal U$ the monster model of $T$, and $\phi(x,y)$ an
$L$-formula. We say that $\phi(x,y)$ has the {\em strict order
theory} (for the theory $T$) if there exists a sequence
$(a_i:i<\omega)$ such that for all $i<\omega$,  $$\phi(\mathcal U,
a_i)\subsetneqq\phi(\mathcal U, a_{i+1}).$$

\medskip\noindent
(ii) A complete theory $T$ has the {\em strict order property} if
there is a formula $\phi(x,y)$ which has the strict order
property (for $T$).

$SOP$ stands for the strict order property, and $NSOP$ for not the
strict order property.
\end{Definition}

In Definition~\ref{SOP in model} below we give a localized version of $SOP$.  (See also
Remark~\ref{remark sop in set} below.)

\medskip
As we will  see shortly, the following localized version of
Shelah's theorem leads to a new characterization of $SOP$ for a
theory. In the following theorem, we  will follow
the argument in Theorem 4.7, chapter 2 \cite{Shelah}.

\begin{Theorem}[Localized Shelah's theorem] \label{strong NIP in a set}  Let $T$ be a complete $L$-theory and $\phi(x,y)$ an
$L$-formula. Suppose that there are  infinite  sequences (not
necessarily indiscernible) $(a_i)$, $(b_j)$ in some model,  a
natural number $N$ and a set $E\subseteq \{1,\ldots,N\}$ such that
 \begin{itemize}
           \item [(i)]  for each $i_1<\cdots<i_N<\omega$,
$\psi(a_{i_1},\ldots,a_{i_N})$ holds,  where
$$\psi(x_1,\ldots,x_N):= \neg\Big(\exists y \big(\bigwedge_{i\in
E}\phi(x_i,y)\wedge \bigwedge_{i\in N\setminus
E}\neg\phi(x_i,y)\big)\Big), \text{ and }$$
           \item [(ii)]   $\phi(a_i,b_j)$ holds if and only if
           $i<j$.
   \end{itemize}
  \noindent Then the theory $T$ has $SOP$.
\end{Theorem}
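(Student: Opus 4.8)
The plan is to translate conditions (i) and (ii) into statements about the definable sets $S_r := \phi(\mathcal U, a_r)=\{y : \phi(a_r,y)\}$, and then to build by hand a strictly decreasing $\omega$-chain of definable sets, which (after reversing the order) witnesses $SOP$. Viewing each $S_r$ simultaneously as a definable set and as the continuous function $\phi(a_r,\cdot)$ on $X=S_{\tilde\phi}$, condition (ii) says exactly that the point $b_j$ lies in $S_r$ iff $r<j$; hence on the countable ``test set'' $B=\{b_j\}$ the sets $S_r$ are strictly decreasing in $r$, since $S_r\cap B=\{b_{r+1},b_{r+2},\dots\}$. Condition (i) says that for every increasing tuple $i_1<\cdots<i_N$ one has $\bigcap_{k\in E}S_{i_k}\subseteq\bigcup_{k\notin E}S_{i_k}$.

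First I would dispose of the degenerate shapes of $E$. On any increasing tuple $(a_{i_1},\dots,a_{i_N})$ the point $b_j$ realizes exactly the \emph{initial-segment} pattern $\{k : i_k<j\}=\{1,\dots,m\}$; so if $E$ were an initial segment, then for a suitable increasing tuple and a suitable $j$ some $b_j$ would realize $E$, contradicting (i). Therefore $E$ is not an initial segment, and it has an ``ascent'': consecutive positions $p\notin E$ and $p+1\in E$. This single inversion is the entire source of the strict order, and everything below is an attempt to extract it cleanly.

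The heart of the matter is to turn the one forbidden pattern into a \emph{telescoping} chain. I would slide a window along $(a_i)$ and form a suitable Boolean combination $\theta_s$ of the $S_r$'s lying in the window — namely an intersection over the $E$-positions (or, in the dual situation, a union over the complementary positions). Applied to the shifted window, the inclusion $\bigcap_{k\in E}S_{i_k}\subseteq\bigcup_{k\notin E}S_{i_k}$ of (i) says precisely that the instance which \emph{enters} the window as $s$ increases is absorbed into the previous value, so that $\theta_{s+1}\subseteq\theta_s$; meanwhile $\theta_s\cap B$ is a single ray $(\,\text{const}+s,\infty\,)$ that shifts by one step, so each inclusion is proper by the strict behaviour of the $S_r$ on $B$. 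To keep the index bookkeeping under control I would first run an induction on $N$ that freezes the endpoints: when $1\in E$ fix position $1$ to a ``bottom'' element and when $N\notin E$ fix position $N$ to a ``top'' element — both harmless for $B$ once the sequence has been stretched by Fact~\ref{Ramsey}(ii) — reducing to the core case $1\notin E$, $N\in E$. In that core case the two extremes $E=\{2,\dots,N\}$ and $E=\{N\}$ are handled respectively by a pure window-intersection and a pure window-union, each of which I have checked produces a genuinely strict chain.

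The main obstacle is exactly this last step: pinning down, for an \emph{arbitrary} non-initial-segment $E$, one Boolean combination and one window spacing for which the absorption coming from (i) and the strictness coming from (ii) hold at the same time. The delicate point is that the non-$E$ positions lying to the left of the ascent are forced to carry small indices, so that used as complements they would swallow the witnesses $b_j$; the resolution is to place them inside a union, where (i) lets them be absorbed rather than subtracted. I expect that stretching the sequence via Fact~\ref{Ramsey} to make room for an infinite chain, together with the endpoint induction, reduces the general $E$ to the two extreme windows already verified, so that nothing beyond careful index-accounting is required to conclude that $T$ has $SOP$.
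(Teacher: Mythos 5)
Your translation into definable sets, the observation that $E$ cannot be an initial segment (which is exactly how hypothesis (ii) enters the argument: on an increasing tuple the points $b_j$ realize precisely the initial-segment patterns), and the two extreme window constructions are all correct; I checked that for $E=\{2,\dots,N\}$ the sets $\theta_s=\bigcap_{r=1}^{N-1}\phi(\mathcal U,a_{s+r})$ and for $E=\{N\}$ the sets $\theta_s=\bigcup_{r=0}^{N-2}\phi(\mathcal U,a_{s+r})$ do form strictly decreasing, uniformly definable chains. But the proof stalls exactly where you admit it does, and that stall is a genuine gap rather than index bookkeeping: for a general non-initial-segment $E$ (already for $N=4$, $E=\{2,4\}$, where (i) reads $S_{i_2}\cap S_{i_4}\subseteq S_{i_1}\cup S_{i_3}$) you exhibit no Boolean combination at all, and the proposed ``endpoint induction'' is not an induction one can run: freezing a position to a fixed ``bottom'' or ``top'' element changes hypothesis (i), which quantifies over increasing tuples from the given sequence, and you give no argument that the frozen configuration satisfies a hypothesis of the same shape with smaller $N$. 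A secondary problem: Fact~\ref{Ramsey}(ii) stretches only $\phi$-$N$-indiscernible sequences, and your plan never passes to an indiscernible subsequence (if you do, hypothesis (ii) must be re-derived for it, which works by taking the diagonal subsequence $b'_l=b_{i_l}$).

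The missing idea is Shelah's swapping argument (Theorem~II.4.7 of \cite{Shelah}), which the paper's proof uses and which handles arbitrary $E$ precisely by \emph{not} sliding all $N$ positions. After Fact~\ref{Ramsey}(i) makes $(a_i)$ $\phi$-$N$-indiscernible, the pattern $\bigwedge_{i\le N}\phi(a_i,y)^{\eta(i)}$ with $\eta$ the characteristic function of $E$ is inconsistent by (i), while the sorted (initial-segment) pattern is consistent by (ii) --- your own observation. Converting $\eta$ into the sorted pattern by adjacent transpositions, replacing $\neg\phi(a_i,y)\wedge\phi(a_{i+1},y)$ by $\phi(a_i,y)\wedge\neg\phi(a_{i+1},y)$ one at a time, there must be a first transposition at which consistency appears; freeze the other $N-2$ conjuncts as a parameter formula $\varphi(\bar a,y)$. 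Now stretch the indiscernible sequence to $(a_i:i\in\mathbb{Q})$ by Fact~\ref{Ramsey}(ii); for rationals $i_0\le i<i'\le i_0+1$, indiscernibility gives that $\varphi(\bar a,y)\wedge\phi(a_i,y)\wedge\neg\phi(a_{i'},y)$ is consistent while $\varphi(\bar a,y)\wedge\neg\phi(a_i,y)\wedge\phi(a_{i'},y)$ is inconsistent, so $\theta(x,y)=\varphi(\bar a,y)\wedge\phi(x,y)$ orders $\{a_i:i\in\mathbb{Q}\cap[i_0,i_0+1]\}$ by strict inclusion of the sets $\theta(\mathcal U,a_i)$, and $T$ has $SOP$ (the parameters $\bar a$ are harmless, as the paper notes). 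The structural difference from your plan is that only two adjacent positions move while the remaining $N-2$ are fixed as parameters; this is what makes arbitrary $E$ tractable, and it is the step your window scheme cannot reproduce.
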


 \noindent Before giving the proof let us remark:

\begin{Remark}
 (i) Note that  Theorem~\ref{strong NIP in a set}(i)
 identifies  a weaker condition ${\frak P}_{\phi,\bar{a}}$ than $NIP$ such that
$OP_{\phi,\bar{a},\bar{b}}+{\frak P}_{\phi,\bar{a}}$ implies
$SOP$, where $\bar{a}=(a_i),\bar{b}=(b_j)$ and
$OP_{\phi,\bar{a},\bar{b}}$ means that $\bar{a},\bar{b}$ witness
$\phi$ has the order property. We will see shortly, in fact, $SOP$
is  equivalent to the existence of $\bar{a},\bar{b}$ and $\phi$
such that $OP_{\phi,\bar{a},\bar{b}}+{\frak P}_{\phi,\bar{a}}$
holds. (See Proposition~\ref{NSOP=bounded variation} below.)

\noindent (ii) We will  establish a connection between this
presentation of $SOP$ and a well-known subclass of Baire~1
functions. (See Remark~\ref{DBSC1} below.) 

\end{Remark}

\begin{proof}[Proof of Theorem \ref{strong NIP in a set}]
 By Fact~\ref{Ramsey}, we can  assume that $(a_i)$ is a
 $\phi$-$N$-indiscernible sequence.  Now, we repeat the argument of Theorem~4.7, chapter~2  of \cite{Shelah}.
 By~(i), there are the natural number $N$ and $\eta : N \rightarrow
\{0,1\}$ defined by $\eta(i)=1$ if $i\in E$, and $=0$ otherwise, such
that $\bigwedge_{i\leq N} \phi(a_i,y)^{\eta(i)}$ is inconsistent.
(Recall  that for a formula $\varphi$, we use the notation
$\varphi^0$ to mean $\neg\varphi$ and $\varphi^1$ to mean
$\varphi$.) Starting with that formula, we change one by one
instances of $\neg\phi(a_i,y) \wedge \phi(a_{i+1},y)$ to
$\phi(a_i,y) \wedge \neg\phi(a_{i+1},y)$. Finally, we arrive at a
formula of the form $\bigwedge_{i<k} \phi(a_i,x) \wedge
\bigwedge_{k\leq i\leq N}
 \neg\phi(a_i,x)$. By (ii), the tuple $b_k$ satisfies that formula.
 Therefore, there is some
$i_0\leq N$, $\eta_0 : N \rightarrow \{0,1\}$ such that
$$\bigwedge_{i\neq i_0, i_0+1} \phi(a_i,y)^{\eta_0(i)} \wedge \neg\phi(a_{i_0},y) \wedge \phi(a_{i_0+1},y)$$
is inconsistent, but
$$\bigwedge_{i\neq i_0, i_0+1} \phi(a_i,y)^{\eta_0(i)} \wedge \phi(a_{i_0},y) \wedge \neg\phi(a_{i_0+1},y)$$
is consistent. Let us define $\varphi(\bar a,y)=\bigwedge_{i\neq
i_0,i_0+1} \phi(a_i,y)^{\eta_0(i)}$.  By Fact~\ref{Ramsey}, we
may increase the sequence $(a_i : i<\omega)$ to a
$\phi$-$N$-indiscernible sequence $(a_i:i\in \mathbb Q)$. Then for
$i_0 \leq i<i' \leq i_0+1$, the formula $\varphi(\bar a,y) \wedge
\phi(a_i,y) \wedge \neg\phi(a_{i'},y)$ is consistent, but
$\varphi(\bar a,y) \wedge \neg\phi(a_i,y) \wedge \phi(a_{i'},y)$
is inconsistent. Thus the formula $\theta(x,y) = \varphi(\bar a,y)
\wedge \phi(x,y)$ has the strict order property.
\end{proof}

Note that the formula $\theta(x,y)$ above has parameters. However
it is clear that if the formula $\eta(x,y,\bar a)$ has $SOP$,
where $\bar a$ are parameters, then so does the formula
$\eta(x,y\bar z)$.

\medskip
Now we want to establish a connection between $SOP$ and a class of
functions.  Recall that a real-valued function on a complete
metric space is said to be of the first Baire class, or Baire~1,
if it is the pointwise limit of a sequence of continuous
functions. The following lemma provides a connection  between
$SOP$ and a {\em proper} subclass of Baire~1 functions, namely
$DBSC$.

For easier reading, we note that the conditions (i), (ii) in
Lemma~\ref{bounded variation} below are abstractions of the notion
{\em  alternation number} in model theory. Of course, they are
not equivalent to the notion $NIP$ for a formula. (See the
explanations after Proposition~\ref{NSOP=bounded variation}
below.) It seems that the direction (i)~$\Rightarrow$~(iii) of
Lemma~\ref{bounded variation} is new to model theorists.

\begin{Lemma} \label{bounded variation}  Let $(f_n)$ be a sequence of $\{0,1\}$-valued functions
on a set $X$. Then the following are equivalent:

 \noindent (i) There  are a natural number $N$ and a set $E\subseteq
\{1,\ldots,N\}$  such that for each $i_1<\cdots<i_N<\omega$,

$$\bigcap_{j\in E}f_{i_j}^{-1}(1)\cap\bigcap_{j\in N\setminus E}f_{i_j}^{-1}(0)=\emptyset.$$

 \noindent (ii) There is a natural number $M$ such that
$\sum_1^\infty|f_n(x)-f_{n+1}(x)|\leq M$ for all $x\in X$.

 \noindent  Suppose moreover that $X$ is a compact metric  space and $f_n$'s are
continuous, then (ii) above (or equivalently (i)) implies (iii)
below:

 \noindent (iii) $(f_n)$ converges pointwise to a function  $f$ which is $DBSC$.
\end{Lemma}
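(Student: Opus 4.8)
The plan is to read both (i) and (ii) as statements about a single fixed point $x$ and the $\{0,1\}$-valued sequence $(f_n(x))$, reducing the equivalence (i)$\Leftrightarrow$(ii) to elementary combinatorics of sign changes. For fixed $x$ the sum $\sum_n|f_n(x)-f_{n+1}(x)|$ is exactly the number of \emph{flips} of $(f_n(x))$, i.e.\ one less than the number of its maximal constant blocks; consequently the longest alternating subsequence of $(f_n(x))$ has length precisely (number of flips)$\,+1$. So (ii) says the longest alternating subsequence is uniformly bounded, while (i) says a single fixed pattern $\eta\in\{0,1\}^N$ (the indicator of $E$) never occurs as a subsequence of $(f_n(x))$ for any $x$. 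The whole equivalence is then a dictionary between ``bounded alternation length'' and ``forbidden pattern''.

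For the easy direction (ii)$\Rightarrow$(i), if $\sum_n|f_n(x)-f_{n+1}(x)|\leq M$ for all $x$, then no $(f_n(x))$ has an alternating subsequence of length $M+2$; taking $N=M+2$ and $E$ the even elements of $\{1,\dots,N\}$ makes the associated pattern $0,1,0,1,\dots$ alternating of length $N>M+1$, so realizing it would produce a forbidden alternating subsequence, giving (i). The converse (i)$\Rightarrow$(ii) is the step I expect to be the main obstacle, since it must show that forbidding a \emph{single} pattern of length $N$ already bounds the variation. The key claim I would isolate and prove is combinatorial: if $s$ is an alternating $\{0,1\}$-sequence of length at least $2N$, then \emph{every} pattern $\eta\in\{0,1\}^N$ occurs as a subsequence of $s$. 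This follows from greedy left-to-right matching, where after matching $\eta_{j-1}$ the next symbol $\eta_j$ is found within at most two further positions (one if $\eta_j\neq\eta_{j-1}$, two if $\eta_j=\eta_{j-1}$), so all $N$ symbols are matched within $2N$ positions. Hence if the pattern determined by $E$ is forbidden, no $(f_n(x))$ can have an alternating subsequence of length $2N$, so $\sum_n|f_n(x)-f_{n+1}(x)|\leq 2N-2$ for every $x$, which is (ii) (the exact constant being immaterial).

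For the final implication (ii)$\Rightarrow$(iii), under the standing hypotheses that $X$ is compact metric and each $f_n$ is continuous, I would first note pointwise convergence: for each $x$ the sequence $(f_n(x))$ is $\{0,1\}$-valued with $\sum_n|f_n(x)-f_{n+1}(x)|\leq M<\infty$, hence has only finitely many flips and is eventually constant, so $f(x):=\lim_n f_n(x)$ exists. To see $f\in DBSC(X)$ I would decompose via positive and negative variation: set $g_n=\sum_{k=1}^{n-1}(f_{k+1}-f_k)^+$ and $h_n=\sum_{k=1}^{n-1}(f_{k+1}-f_k)^-$, which are continuous, nonnegative, nondecreasing in $n$, and bounded above by $M$. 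By Fact~\ref{Baire} their increasing pointwise limits $G$ and $H$ are bounded lower semi-continuous functions. Since $f_n-f_1=g_n-h_n$, letting $n\to\infty$ gives $f=(f_1+G)-H$; as $f_1+G$ (continuous plus lower semi-continuous) and $H$ are both bounded lower semi-continuous, $f$ is a difference of bounded semi-continuous functions, i.e.\ $f\in DBSC(X)$, which completes the argument.
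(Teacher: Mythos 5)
Your proof is correct and takes essentially the same approach as the paper's: the equivalence (i)$\Leftrightarrow$(ii) via the dictionary between a forbidden $\{0,1\}$-pattern and bounded alternation (arriving at the same bound $2N-2$), and (ii)$\Rightarrow$(iii) via the positive/negative-variation decomposition into increasing limits of continuous functions, which are lower semi-continuous by Fact~\ref{Baire}. Your write-up is in fact tidier on two minor points where the paper slips: in (ii)$\Rightarrow$(i) the threshold must be $N=M+2$ (the paper's $N=M+1$ is off by one, since an alternating pattern of length $M+1$ only forces $M$ flips), and in the decomposition the correct identity with the paper's choice of signs is $f=F_2-F_1$, which your anchoring at $f_1$ avoids entirely.
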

\begin{proof} (i) $\Leftrightarrow$ (ii): Suppose that (i) holds.
Note that (i) states that we have a special {\em  pattern}   that
never exists; that is,  $\bigcap_{j\in E}f_{i_j}^{-1}(1)\cap\bigcap_{j\in N\setminus E}f_{i_j}^{-1}(0)=\emptyset$. Suppose, for a contradiction, that there is an element $x\in X$ such that $\sum_1^\infty|f_k(x)-f_{k+1}(x)|\geq 2N-1$. Therefore, there are $f_{k_1},\ldots,f_{k_{2N}}$ such that $(f_{k_i}(x)=1 \Leftrightarrow f_{k_{i+1}}(x)=0)$ for all $i<2N$. Let $i_1$ be  in $\{k_1, k_2\}$ such that the value of $f_{i_1}(x)$  is the same as the pattern above, i.e. $f_{i_1}(x)=1$ iff $1\in E$.
Let $i_2$ be in $\{k_3, k_4\}$ such that the value of $f_{i_2}(x)$  is the same as the pattern above. We can choose $i_3,\ldots,i_N$ similarly.  
Note that   $x\in\bigcap_{j\in E}f_{i_j}^{-1}(1)\cap\bigcap_{j\in N\setminus E}f_{i_j}^{-1}(0)$.  This is the special pattern above, a contradiction.
 The other direction is even easier. Indeed, let $N=M+2$, and
 $E=\{k: k\text{ is odd and } k\leq N\}$. Then $\bigcap_{j\in E}f_{i_j}^{-1}(1)\cap\bigcap_{j\in N\setminus E}f_{i_j}^{-1}(0)=\emptyset$.

(ii)~$\Rightarrow$~(iii): Clearly, $(f_n)$ converges pointwise to
a function $f$. (We can define $f_0(x)=0$ for all $x$.) Set  $F_1(x)=\sum_0^\infty(f_{n+1}-f_n)^+(x)$ and
$F_2(x)=\sum_0^\infty(f_{n+1}-f_n)^-(x)$. (Recall that for a
function $h:X\to\Bbb R$, $h^+(x)=\max(h(x),0)$ and
$h^-(x)=\max(-h(x),0)$.)  Then $f=F_1-F_2$ and $F_1, F_2$ are both
lower semi-continuous. (Note that
$g_k=\sum_0^{k-1}(f_{n+1}-f_n)^+\nearrow F_1$ and since the limit of
an {\em increasing} sequence of continuous functions is lower
semi-continuous (Fact~\ref{Baire}), so $F_1$ is lower
semi-continuous. Similarly for $F_2$.)
\end{proof}

\begin{Remark} (i) Note that Lemma~\ref{bounded variation}(i)  is an
abstraction of the condition (i) of Theorem~\ref{strong NIP in a
set}. Indeed, let $f_n(y)=\phi(a_n,y)$ where $a_n$ is a parameter
in some model.

\noindent (ii) Let us do a model theoretic translation, we set
$f_n(y)=\phi(a_n,y)$ and $X=S_{\tilde{\phi}}(A)$ where
$A=\{a_n:n<\omega\}\subseteq M\models T$. Clearly, $\phi(a_n,y)$
is continuous and since $A$ is countable, so $X$ is a {\em
metric} space.
This means that additional assumptions of (iii) in Lemma~\ref{bounded variation} hold.

\noindent (iii) We can expect a converse to
(ii)~$\Rightarrow$~(iii) of the  above lemma. Indeed, by
Fact~\ref{Baire} above, if $X$ is a compact metric space and $f$ is the $DBSC$ then there are a
sequence $(f_n)$ of (bounded) continuous functions and a natural
number $M$ such that $(f_n)$ converges pointwise to $f$ and
$\sum_1^\infty|f_n(x)-f_{n+1}(x)|\leq M$ for all $x$.

\noindent (iv) Note that Lemma~\ref{bounded variation}(ii)
guarantees that the sequence $(f_n)$ converges pointwise, but
there are Baire~1 functions which are not $DBSC$ (see
Remark~\ref{Chaatit}(i) below).
\end{Remark}

\medskip
The following gives a  new characterization of $SOP$ (for a
theory) and shows that the converse of Theorem~\ref{strong NIP in
a set} above is also true.

\begin{Proposition}[Characterization of $NSOP$] \label{NSOP=bounded
variation} Let $T$ be a  complete $L$-theory and $\mathcal U$ the
monster model of $T$. Then the following are equivalent:

 \noindent (i) $T$ is $NSOP$.

  \noindent (ii)  There are no formula $\phi(x,y)$ and sequences   $(a_i)$ and
$(b_j)$ in  $\mathcal U$, a natural number $N$ and a set
$E\subseteq\{1,\ldots,N\}$ such that two conditions (i) and (ii)
in Theorem~\ref{strong NIP in a set} hold, simultaneously.

  \noindent (iii) There are no formula $\phi(x,y)$ and \textbf{indiscernible} sequences   $(a_i)$ and
$(b_j)$ in  $\mathcal U$, a natural number $N$ and a set
$E\subseteq\{1,\ldots,N\}$ such that two conditions (i) and (ii)
in Theorem~\ref{strong NIP in a set} hold, simultaneously.

 \noindent (iv)  For any formula $\phi(x,y)$ and any
sequence (not necessarily indiscernible)  $(a_i:i<\omega)$, if
there is a natural number $N$ such that for any $b\in\mathcal U$,
$\sum_{i=1}^\infty|\phi(a_i,b)-\phi(a_{i+1},b)|\leq N$, then
there is no infinite sequence $(b_j)$ such that $\phi(a_i,b_j)$
holds iff $i<j$.

 \noindent (v) For any formula $\phi(x,y)$ and any \textbf{indiscernible}
sequence $(a_i:i<\omega)$, if there is a natural number $N$ such
that for any $b\in\mathcal U$,
$\sum_{i=1}^\infty|\phi(a_i,b)-\phi(a_{i+1},b)|\leq N$, then
there is no infinite sequence $(b_j)$ such that $\phi(a_i,b_j)$
holds iff $i<j$.

\medskip\noindent
Moreover, if $T$ is $NIP$ then $T$ is $NSOP$ iff for any formula
$\phi(x,y)$ there is a natural number $N$ such that for any
sequence  (not necessarily indiscernible) $(a_i:i<\omega)$, if for
any $b\in\mathcal U$,
$\sum_{i=1}^\infty|\phi(a_i,b)-\phi(a_{i+1},b)|\leq N$, then
there is no infinite sequence $(b_j)$ such that $\phi(a_i,b_j)$
holds iff $i<j$.
\end{Proposition}
\begin{proof}
 (i)~$\Rightarrow$~(ii) is Theorem~\ref{strong NIP in
a set}.  (ii)~$\Rightarrow$~(iii) is evident, and
(iii)~$\Rightarrow$~(ii) follows from Ramsey's theorem and the
compactness theorem.  (i)~$\Rightarrow$~(iv)  follows  from
Theorem~\ref{strong NIP in a set} and Lemma~\ref{bounded
variation}. (iv)~$\Rightarrow$~(ii) follows from
Lemma~\ref{bounded variation}.  (iv)~$\Rightarrow$~(v) is evident.

 (ii)~$\Rightarrow$~(i): Suppose, in order to get a
contradiction,  that  $\phi(x,y)$ has $SOP$ for the  theory  $T$. This
means that there is an \textbf{indiscernible} sequence $(a_i)$
such that $\exists y(\neg\phi(a_i,y)\wedge\phi(a_j,y))$ iff
$i<j$. So, there is some sequence $(b_j)$ such that
$\phi(a_i,b_j)$ holds iff $i<j$, i.e.,   the condition (ii) in
Theorem~\ref{strong NIP in a set} holds.  Let us define
$\psi(x_1,x_2)=\exists y(\phi(x_1,y)\wedge\neg\phi(x_2,y))$.
 So, for $i<j$,  $\psi(a_i,a_j)$ does not hold.  Let $N=\{1,2\}$ and
$E=\{1\}$ and $\psi(x_1,x_2)$ be as above.
 Then the condition (i) in Theorem~\ref{strong NIP in a
set} holds as well. This is a contradiction.

(v)~$\Rightarrow$~(i): By  Lemma~\ref{bounded variation} and an
argument similar to the direction (ii)~$\Rightarrow$~(i), the
proof is completed.
\end{proof}

Recall that for a formula $\phi(x,y)$, an indiscernible sequence
$(a_i)$ and a parameter $b$, the {\em alternation} of  $\phi(x,b)$
on $(a_i)$ is bounded by a natural number $n$, if there are at
most $n$ increasing indices $i_1<\cdots<i_n$ such that
$\models\phi(a_i,b)\leftrightarrow\neg\phi(a_{i+1},b)$ for all
$i<n$. A theory $T$ has $NIP$ if for any formula $\phi(x,y)$ there
is a natural number $n_\phi$ such that for any indiscernible
sequence $(a_i)$ and any parameter $b$, the  alternation of
$\phi(x,b)$ on $(a_i)$ is bounded by $n_\phi$. Note that in $NIP$
case, such numbers depend \textbf{just} on formulas.

Using this notion, Proposition~\ref{NSOP=bounded variation}(ii)
above asserts that a theory $T$ is $NSOP$ if for any formula
$\phi(x,y)$ and any  sequence $\bar a=(a_i)$, \textbf{if} there is
a natural number $n_{\phi,\bar a}$ such that for any $b$ the
alternation of $\phi(x,b)$ on  $\bar a$ is bounded by
$n_{\phi,\bar a}$, \textbf{then} there is no infinite sequence
$(b_j)$ such that $\phi(a_i,b_j)$ holds iff $i<j$.  Note that the
sequences are not necessarily indiscernible and such natural
numbers $n_{\phi,\bar a}$ depend on \textbf{both} the formulas
and the sequences; not just on formulas.
Thus,  Lemma~\ref{bounded variation} above  presents a `localized
 and wider' notion of alternation number.

\begin{Remark}   \label{DBSC1}
 Recall that,  for a set $A$ of an $L$-structure $M$  and  an $L$-formula $\phi(x,y)$, one can
consider the continuous function
$\phi(a,y):S_{\tilde\phi}(A)\to\{0,1\}$ defined by $\phi(a,q)=1$ if
$\phi(a,y)\in q$ and $0$ if $\phi(a,y)\notin q$. (Here
$\tilde{\phi}$ is the same formula as $\phi$, but we have
exchanged the role of variables and parameters, and
$S_{\tilde\phi}(A)$ is the space of  complete  $\tilde\phi$-types
over $A$.) If $A$ is countable, $S_{\tilde\phi}(A)$ is a compact
Polish space. Recall that, using a crucial result due to Eberlein
and Grothendieck (Fact~\ref{EG}), for a  sequence (not
necessarily indiscernible) $(a_i)$ there is no infinite sequence
$(b_j)$ such that $\phi(a_i,b_j)~\Leftrightarrow~i<j$ if and only
if every function in the pointwise closure of
$\{\phi(a_i,y):S_{\tilde\phi}(\{a_i\}_{i<\omega})\to\{0,1\}|~i<\omega\}$
is continuous.



\medskip  \noindent
  By Lemma~\ref{bounded variation} and
Proposition~\ref{NSOP=bounded variation}, $NSOP$ corresponds to
the class of functions which are difference of bounded
semi-continuous functions ($DBSC$) on the type spaces.  For a
formula $\phi(x,y)$ we set $D(\phi)=\{f:$  there exist $(a_i)$
and natural number $N$ such that $\phi(a_i,y)$ converges
pointwise to $f$ and
$\sum_1^\infty|\phi(a_i,q)-\phi(a_{i+1},q)|\leq N$ for all $q\in
S_{\tilde\phi}(\{a_i\}_{i<\omega})\}$. Similarly, we set
$C(\phi)=\{f:$  there exists  $(a_i)$ such that $\phi(a_i,y)$
converges pointwise to $f$  on
$S_{\tilde\phi}(\{a_i\}_{i<\omega})$ and $f$ is continuous$\}$.
Using these definitions and the facts above, a complete theory $T$
has $SOP$ if and only if there is a formula $\phi$ such that
$D(\phi)\setminus C(\phi)\neq\emptyset$. (See Fact~\ref{EG}
above.)  Notice that the above characterization of $NSOP$ is of
the form ``if~...~then~...". Indeed, by Fact~\ref{EG} and
Proposition~\ref{NSOP=bounded variation},  a theory $T$ is $NSOP$
if and only if

\medskip
``for any formula $\phi(x,y)$ and any (infinite) sequence
$(a_i:i<\omega)$, \textbf{if}  for some natural number $N$,
$\sum_1^\infty|\phi(a_i,q)-\phi(a_{i+1},q)|\leq N$ for all $q\in
S_{\tilde\phi}(\{a_i\}_{i<\omega})$, \textbf{then} $\phi(a_i,y)$
converges to a {\em continuous} function."
\end{Remark}

In \cite{KP}, the notions  $NIP$ and/or $NOP$ relative to a set or
model were studied.  We are now ready to introduce the analogous notion of $NSOP$ in a model or a set.

\begin{Definition}[$NSOP$ in a model]  \label{SOP in model}  {\em  Let $T$ be a complete $L$-theory, $\phi(x,y)$ an $L$-formula, and $M$ a model of $T$.
\newline
(i) A  set $\{a_{i}:i < \kappa\}$ of $l(y)$-tuples from $M$ is
said to be a   {\em $BSOP$-witness for $\phi(x,y)$} if the following
conditions (1),(2)  hold, simultaneously.
 \begin{itemize}
           \item [(1)] there are a natural number $N$ and a set
$E\subseteq\{1,\ldots,N\}$ such that for each
$i_1<\cdots<i_N<\kappa$, $M\models
\psi(a_{i_1},\ldots,a_{i_N})$   where
$$\psi(x_1,\ldots,x_N):= \neg\Big(\exists y \big(\bigwedge_{i\in
E}\phi(x_i,y)\wedge \bigwedge_{i\in N\setminus
E}\neg\phi(x_i,y)\big)\Big), \text{ and }$$
           \item [(2)]  for each natural number $n$ and $i_1<\cdots<i_n<\kappa$,
$$M\models \exists y_1\ldots y_n\Big(\bigwedge_{k<j\leq
n}\phi(a_{i_k},y_j)\wedge\bigwedge_{j'\leq k'\leq
n}\neg\phi(a_{i_{k'}},y_{j'})\Big).$$
 \end{itemize}
(ii) Let $A$ be a set of $l(x)$-tuples from $M$. Then   $\phi(x,y)$
 has $BSOP$-witness in $A$ if there is a countably infinite
sequence $(a_{i}:i<\omega)$ of elements of $A$ which is a
$BSOP$-witness for  $\phi(x,y)$.
\newline
(iii) Let $A$ be a  set of $l(x)$-tuples in $M$. We say that 
$\phi(x,y)$ has   $NBSOP$-witness in $A$ if it does
not have $BSOP$-witness in $A$.
\newline
(iv) $\phi(x,y)$ has  $NBSOP$-witness in $M$ if it
has  $NBSOP$-witness in the set of $l(x)$-tuples
from $M$.}
\end{Definition}

\begin{Remark} \label{remark sop in set} (i) If $\phi$ has $BSOP$-witness in $A$, then a Boolean
combination of instances of $\phi$ has $SOP$ for the theory $T$.
Of course, if $\phi$ has $SOP$ for $T$, then it has
$BSOP$-witness in some models of $T$.
\newline
(ii)  $\phi$ has $NSOP$ for the theory $T$ iff it has  $NBSOP$-witness in every model $M$ of $T$ iff it has $NBSOP$-witness in some model $M$ of  $T$ in which
all types over the empty set in countably many variables are
realised.
\newline
(iii)  If $\phi(x,y)$ has $BSOP$-witness in some model $M$ of
$T$, then there are arbitrarily long $BSOP$-witness for $\phi$
(of course in different models).
\end{Remark}

We will shortly give examples that indicate why  this notion is
useful (see Examples~\ref{example1} and \ref{example2} below).

\subsection{Remarks on $NIP$}
 We already knew that a theory is $NIP$ iff for any formula $\phi(x,y)$ and any sequence  $(a_i:i<\omega)$ in the monster model
 there is a subsequence $(a_{j_i}:i<\omega)$  such that for any element $b$ (in the monster model) there is
an eventual truth value of  $(\phi(a_{j_i},b):i<\omega)$. In the
language of function theory, the  subsequence
$(\phi(a_{j_i},y):i<\omega)$ converges to a (Baire 1) function
$f$. In the following we will see that the criterion presented in
Lemma~\ref{bounded variation} makes it possible to say more: the
limit $f$ should be $DBSC$.

\begin{Proposition}[Characterization of $NIP$] \label{NIP=DBSC} Let $T$ be a  complete $L$-theory, $\phi(x,y)$ an $L$-formula and $\mathcal U$
the monster model of $T$. Then the following are equivalent:

 \noindent (i) $\phi$ has  $NIP$ for $T$.

 \noindent (ii) For any
sequence (not necessarily indiscernible) $(a_i:i<\omega)$, there
is a subsequence $(a_{j_i}:i<\omega)$  such that for any
$b\in\mathcal U$ there is an eventual truth value of
$(\phi(a_{j_i},b):i<\omega)$.

 \noindent (iii) For any
sequence (not necessarily indiscernible) $(a_i:i<\omega)$, there
are a subsequence $(a_{j_i}:i<\omega)$ and a natural number $N$
such that
$\sum_{i=1}^\infty|\phi(a_{j_i},b)-\phi(a_{j_{i+1}},b)|\leq N$ for
\textbf{each} $b\in\mathcal U$.

\noindent (iv) For  any  sequence (not necessarily indiscernible)
$(a_i:i<\omega)$, there is a subsequence $(a_{j_i}:i<\omega)$
such that the sequence $\phi(a_{j_i},y)$ converges to a function
$f$ which is $DBSC$.
\end{Proposition}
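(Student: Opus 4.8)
The plan is to prove the cycle of implications $(i)\Rightarrow(iii)\Rightarrow(iv)\Rightarrow(ii)\Rightarrow(i)$, which closes the loop without ever passing through the direction $(iv)\Rightarrow(iii)$ directly. This is deliberate: going from a subsequence that merely converges to a $DBSC$ function back to one with \emph{uniformly} bounded variation is exactly the delicate point, and routing instead through $NIP$ lets the uniform bound come for free from the alternation number. Throughout I use the model-theoretic translation: having fixed a subsequence $(a_{j_i})$, I set $f_n=\phi(a_{j_n},y)$ on the space $X=S_{\tilde\phi}(\{a_{j_i}\}_{i<\omega})$, which is compact as a Stone space and metrizable since the parameter set is countable, so each $f_n$ is continuous and $\{0,1\}$-valued. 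I also note that every $q\in X$ is realized by some $b\in\mathcal U$, so a bound ``for all $b\in\mathcal U$'' coincides with a bound ``for all $q\in X$''.

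For $(i)\Rightarrow(iii)$, which I expect to be the main obstacle, I would first invoke $NIP$ to fix the alternation number $n_\phi$, a bound depending only on $\phi$. Given an arbitrary $(a_i)$, I use Fact~\ref{Ramsey} together with a diagonal argument over the finitely many $\phi$-$N$-types in each arity to extract a $\phi$-indiscernible subsequence $(a_{j_i})$. The content of $NIP$ is that on such a sequence the truth values $\phi(a_{j_i},b)$ alternate at most $n_\phi$ times, \emph{uniformly} in $b$; rewriting this as $\sum_{i=1}^\infty|\phi(a_{j_i},b)-\phi(a_{j_{i+1}},b)|\le n_\phi$ for every $b\in\mathcal U$ gives exactly $(iii)$ with $N=n_\phi$. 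The step requiring care is the passage from the paper's alternation number (phrased over fully indiscernible sequences) to the $\phi$-indiscernible subsequence actually produced by Ramsey, and, crucially, the fact that the resulting bound is independent of $b$ — this uniformity is the whole reason $(iii)$ is strictly stronger than the pointwise finiteness in $(ii)$.

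The remaining implications are then short. For $(iii)\Rightarrow(iv)$ I would simply observe that condition $(iii)$ is condition $(ii)$ of Lemma~\ref{bounded variation} for the functions $f_n$ on $X$; since $X$ is compact metric and the $f_n$ are continuous, the lemma's implication $(ii)\Rightarrow(iii)$ yields that $(f_n)$ converges pointwise to a $DBSC$ function $f$, which is $(iv)$. For $(iv)\Rightarrow(ii)$, pointwise convergence of the $\{0,1\}$-valued sequence $\phi(a_{j_i},b)$ to $f(b)$ forces that sequence to be eventually constant at each $b$, hence an eventual truth value exists, and the same subsequence witnesses $(ii)$. Finally, $(ii)\Rightarrow(i)$ I would prove by contraposition: if $\phi$ has $IP$, fix a sequence $(a_i)$ on which all patterns are realized, so that for \emph{every} infinite subsequence $(a_{j_i})$ one can choose $b$ with $\phi(a_{j_i},b)$ holding exactly on the even indices; this $b$ has no eventual truth value, so $(ii)$ fails for this $(a_i)$.

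I would close by emphasizing where the real work sits. Implications $(iii)\Rightarrow(iv)\Rightarrow(ii)$ are essentially bookkeeping on top of Lemma~\ref{bounded variation} and the triviality that a convergent $\{0,1\}$-valued sequence is eventually constant, and $(ii)\Rightarrow(i)$ is the standard $IP$-witness argument. All the weight is on $(i)\Rightarrow(iii)$: extracting a $\phi$-indiscernible subsequence and extracting from $NIP$ a single bound $N$ that works simultaneously for every $b\in\mathcal U$. It is exactly this uniformity that upgrades the classical ``eventual truth value'' characterization $(ii)$ to the bounded-variation form $(iii)$, and hence — via Lemma~\ref{bounded variation} — to the statement that the limit is not merely Baire~$1$ but $DBSC$.
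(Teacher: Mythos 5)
Your architecture matches the paper's almost exactly: the paper also disposes of (iii)$\Rightarrow$(iv) by Lemma~\ref{bounded variation}, of (iv)$\Rightarrow$(ii) as evident, and of (i)$\Leftrightarrow$(ii) as the folklore $IP$-witness argument, and it too places all the weight on (i)$\Rightarrow$(iii). The problem is that at that one weight-bearing step your proof asserts precisely what has to be proved. The paper's definition of $NIP$ bounds the alternation of $\phi(x,b)$ only on \emph{fully indiscernible} sequences, whereas Ramsey plus diagonalization gives you a subsequence that is merely $\phi$-$N$-indiscernible for each $N$; such a sequence need not be indiscernible, and in general no further extraction from an arbitrary sequence can make it so. Worse, the property you need to propagate --- ``some $b$ realizes a given alternation pattern on a given increasing tuple'' --- is an \emph{existential} statement, $\exists y\,\bigwedge_i\phi(x_i,y)^{\eta(i)}$, which is not recorded in $\mathrm{tp}_\phi$ of the tuple at all; so $\phi$-indiscernibility in the sense of the paper's Definition~2.3 does not even guarantee that pattern-realizability is the same for all increasing tuples, which is what your phrase ``uniformly in $b$'' silently requires.

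The missing argument is a compactness/EM-type transfer, and it is exactly the content of the paper's proof. The paper argues by contradiction: if (iii) fails for $(a_i)$, then \emph{every} subsequence has unbounded variation, so by Lemma~\ref{bounded variation} ((i)$\Leftrightarrow$(ii), used in contrapositive form) for every $n$ one finds, inside a subsequence made $\varphi$-$n$-indiscernible for the finitely many relevant formulas $\varphi$, an increasing tuple together with a $b$ alternating $n$ times; these finite configurations witness the finite satisfiability of the type $\big\{\exists y\big(\bigwedge_{i=1}^n\phi(x_{2i},y)\wedge\neg\phi(x_{2i+1},y)\big)\wedge\big((x_1,\dots,x_{2n+1})\text{ is indiscernible}\big):n<\omega\big\}$, and compactness then produces a genuinely indiscernible sequence $(c_i)$ and a parameter $d$ with $\phi(c_i,d)$ iff $i$ is even, contradicting $NIP$. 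Your sketch can be repaired along the same lines (extract with respect to finite sets of formulas that include the existential ones, pass to tails, and transfer the resulting EM-type to an actual indiscernible sequence by compactness), but as written the sentence ``the content of $NIP$ is that on such a sequence the truth values alternate at most $n_\phi$ times, uniformly in $b$'' is a restatement of the goal rather than a proof of it; since you yourself locate all the real work at this step, the proposal as it stands does not close the argument.
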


\begin{proof} The equivalence (i)~$\Leftrightarrow$~(ii) is
folklore. The direction (iii)~$\Rightarrow$~(iv) follows from
Lemma~\ref{bounded variation}. The direction
(iv)~$\Rightarrow$~(ii) is evident.

\noindent (i)~$\Rightarrow$~(iii): Suppose, for a contradiction,
that  there is a sequence $(a_i)$  such that (iii) fails. Let $n$
be an arbitrary natural number and $\varphi$ be an arbitrary
formula. By Fact~\ref{Ramsey}, we can assume that $(a_i)$ is
$\varphi$-$n$-indiscernible.
 Then,  by Lemma~\ref{bounded variation},  there is a (finite) subsequence $a_{j_1},\ldots,a_{j_n}$ and
$b\in\mathcal U$ such that $\phi(a_{j_i},b)$ holds iff $i$ is
even.  As $n$ and $\varphi$ are arbitrary,  the following set is
a type
$$\Big\{\exists
y\big(\bigwedge_{i=1}^n\phi(x_{2i},y)\wedge\neg\phi(x_{2i+1},y)\big)
\wedge\big((x_1,\ldots,x_{2n+1})\text{ is
indiscernible}\big):n<\omega\Big\}.$$ By the compactness theorem,
there are an indiscernible sequence $(c_i)$ and an element  $d$
such that $\phi(c_i,d)$ holds if and only if $i$ is even,  a
contradiction.
\end{proof}

Proposition~\ref{NIP=DBSC}(iv) above  and the following remark
show that the approach of the present paper would be useful.

\begin{Remark} \label{Chaatit} (i) Recall that for a compact metric space $X$ and a subset $A\subset X$,
then the indicator function $1_A$ is Baire 1 if and only if $A$ is
both $F_\sigma$ and $G_\delta$.
(See  Definition~24.1 and Theorem~24.10 of \cite{Kechris}. In this case, notice that the sets $A=\{x:1_A(x)=1\}$ and $A^c=\{x:1_A(x)=0\}$ are both $G_\delta$; that is, they are  countable intersections of open sets.)
 Notice that the class of functions which are difference of bounded semi-continuous
functions is a {\em  proper} subclass of Baire 1 functions.
Furthermore, every $\{0,1\}$-valued function is the $DBSC$ if and
only if there exist  disjoint differences  of closed sets
$W_1,\ldots,W_m$ such that  $f=\sum_{i=1}^m 1_{W_i}$  (see
\cite[Proposition~2.2]{CMR}). This result makes clear why
$DBSC(X)$ is a {\em proper} subclass of $B_1(X)$.

\noindent (ii) As Pierre Simon pointed out to us, it is  known
that  for every sequence $(a_i)$ in the monster model of a $NIP$
theory one can find a subsequence $(a_{j_i})$ that their types
converges to a finitely satisfiable type and it is  known that
invariant types in $NIP$ theories  have definitions which are
finite Boolean combinations of closed sets (see
\cite[Proposition~2.6]{HP}). In fact, by the above remark, that
is equivalent to Proposition~\ref{NIP=DBSC}(iv).
\end{Remark}

The following statement clearly indicates why some people-- not
all them-- say that the independence property and the strict order
property are {\em orthogonal}. That is, Shelah's theorem is of
the form  ``$p\wedge(p\to s)\equiv$ stability."

\begin{Corollary}[Shelah's Theorem, revisited]
  \label{Shelah revisited}
Let $T$ be a complete $L$-theory. Then the following are
equivalent:
\begin{itemize}
  \item [(1)] $T$ is stable.
  \item [(2)] The following two properties hold:
   \begin{itemize}
           \item [(i)]($NIP$): For any formula $\phi(x,y)$ and any
 sequence (not necessarily indiscernible) $(a_i:i<\omega)$, there are a subsequence
$(a_{j_i}:i<\omega)$ and a natural number $N$ such that the
sequence $\phi(a_{j_i},y)$ converges to a function $f$ and  for
any $b$ in the monster model,
$\sum_1^\infty|\phi(a_{j_i},b)-\phi(a_{j_{i+1}},b)|\leq N$,  and
           \item [(ii)]($NSOP$):   For any formula $\phi(x,y)$ and any
sequence (not necessarily indiscernible) $(a_i:i<\omega)$,
\textbf{if}  the sequence $\phi(a_i,y)$ converges to a function
$f$ and there is some natural number $N$ such that for any $b$ in
the monster model,
$\sum_1^\infty|\phi(a_{i},b)-\phi(a_{i+1},b)|\leq N$,
\textbf{then} $f$ is continuous.
   \end{itemize}
\end{itemize}
\end{Corollary}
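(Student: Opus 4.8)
The plan is to recognize the Corollary as nothing more than a repackaging of Propositions~\ref{NIP=DBSC} and~\ref{NSOP=bounded variation}, glued by the classical identity ``$T$ is stable $\iff$ $T$ is $NOP$''. First I would match the two displayed conditions to the two propositions. Quantified over all formulas, condition~(2)(i) is exactly condition~(iii) of Proposition~\ref{NIP=DBSC}: once a subsequence satisfies $\sum_1^\infty|\phi(a_{j_i},b)-\phi(a_{j_{i+1}},b)|\le N$ for every $b$, the pointwise convergence of $\phi(a_{j_i},y)$ to some $f$ is automatic (as noted in the proof of Lemma~\ref{bounded variation}), so (2)(i) says precisely that every $\phi$ has $NIP$. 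Condition~(2)(ii) is the Eberlein--Grothendieck reformulation of condition~(iv) of Proposition~\ref{NSOP=bounded variation}: by Fact~\ref{EG}, for a fixed sequence $(a_i)$ the absence of an order-witnessing $(b_j)$ is the same as continuity of the pointwise limit of the $\phi(a_i,y)$, so (2)(ii) is literally the statement displayed at the end of Remark~\ref{DBSC1}, namely ``$T$ is $NSOP$''. Hence (2) $\iff$ ($T$ is $NIP$ and $NSOP$).

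For (1)$\Rightarrow$(2) I would use only the trivial half of Shelah's dichotomy: stability is $NOP$, and $NOP$ forces both $NIP$ and $NSOP$ (since $IP\Rightarrow OP$ and $SOP\Rightarrow OP$); Proposition~\ref{NIP=DBSC} then yields (2)(i) and Proposition~\ref{NSOP=bounded variation} together with Fact~\ref{EG} yields (2)(ii). The content is in (2)$\Rightarrow$(1), which I would prove by contradiction directly in the present framework rather than re-citing the hard direction of Shelah's theorem. Suppose some formula $\phi$ has $OP$, witnessed by $(a_i)$ and $(b_j)$ with $\phi(a_i,b_j)\iff i<j$. Applying (2)(i) to $(a_i)$, pass to a subsequence $(a_{j_i})$ whose functions have uniformly bounded variation on $S_{\tilde\phi}(\{a_{j_i}\})$. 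The subsequence still witnesses the order property, since $\phi(a_{j_m},b_{j_n})\iff j_m<j_n\iff m<n$. But Proposition~\ref{NSOP=bounded variation}(iv) (the content of (2)(ii)) applied to $(a_{j_i})$ says that uniformly bounded variation rules out any order-witnessing sequence for $(a_{j_i})$ --- contradicting the pair $(a_{j_m}),(b_{j_n})$ just exhibited. Therefore no $\phi$ has $OP$, i.e.\ $T$ is stable.

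The genuine mathematical weight sits entirely inside the two quoted propositions --- ultimately inside Theorem~\ref{strong NIP in a set}, the optimized $OP\Rightarrow SOP$ step, and inside the $NIP$ characterization of Proposition~\ref{NIP=DBSC} --- so the Corollary itself is bookkeeping. The points I would be most careful about are: (a) that the automatic-convergence observation legitimately identifies (2)(i) with Proposition~\ref{NIP=DBSC}(iii); (b) that Fact~\ref{EG} is what lets me pass between the ``continuous limit'' phrasing of (2)(ii) and the ``no order witness'' phrasing of Proposition~\ref{NSOP=bounded variation}(iv); and (c) in the contradiction step, that I am entitled to apply (2)(ii)/(iv) to the subsequence $(a_{j_i})$ --- this is legitimate precisely because those conditions are asserted for \emph{every} sequence, and that the order property descends to the paired subsequences $(a_{j_m}),(b_{j_n})$. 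I would finally verify that ``stable $\iff NOP$'' is invoked only in its standard form and that no use of Shelah's hard direction is smuggled in, so that the argument is not circular with the optimized theorem it is meant to illustrate.
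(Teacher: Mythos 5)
Your proposal is correct, and your identification step (matching (2)(i) with Proposition~\ref{NIP=DBSC} and matching (2)(ii), via Fact~\ref{EG}, with $NSOP$ as in Remark~\ref{DBSC1}) is exactly what the paper does. Where you genuinely diverge is the direction (2)~$\Rightarrow$~(1): the paper closes the argument by simply citing ``the usual form of Shelah's theorem'' (stable iff $NIP$ and $NSOP$), whereas you prove this direction directly --- given an order witness $(a_i),(b_j)$ for $\phi$, extract by (2)(i) a bounded-variation subsequence $(a_{j_i})$, observe that the paired subsequences $(a_{j_m}),(b_{j_n})$ still witness $OP$ since $j_m<j_n$ iff $m<n$, and contradict (2)(ii) read through Fact~\ref{EG}. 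This argument is sound; the only points needing care are the ones you flag yourself: automatic pointwise convergence of a bounded-variation sequence of $\{0,1\}$-valued functions, and the fact that the pointwise closure of a convergent sequence of continuous functions consists of the terms together with the limit, so that Fact~\ref{EG}(iii) reduces to continuity of the single function $f$. What your route buys is precisely the self-contained, Eberlein--\v{S}mulian-style argument that the paper only gestures at in the sentence following the corollary; combined with the two propositions, it re-proves, rather than invokes, the hard half of Shelah's dichotomy. What the paper's route buys is brevity and the transparency that the corollary is a pure translation statement. One small correction to your final paragraph: the hope that ``no use of Shelah's hard direction is smuggled in'' is not fully realized in your (1)~$\Rightarrow$~(2) step, because deriving (2)(ii) from $NSOP$ via Proposition~\ref{NSOP=bounded variation} rests on Theorem~\ref{strong NIP in a set}, which is exactly the optimized hard direction; this creates no circularity (that theorem is proved independently), but if you want (1)~$\Rightarrow$~(2) to be genuinely soft you should derive (2)(ii) directly from $NOP$: stability rules out an order witness for any sequence, so by Fact~\ref{EG} every pointwise limit of the functions $\phi(a_i,y)$ is already continuous, with no detour through $NSOP$.
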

\begin{proof} By Proposition~\ref{NIP=DBSC}, $NIP$ is equivalent to
(i). By Proposition~\ref{NSOP=bounded variation} and Fact~\ref{EG}
(or just Remark~\ref{DBSC1}), $NSOP$ is equivalent to (ii). Now,
by the usual form of Shelah's theorem the proof is completed.
\end{proof}

We can give a proof of Shelah's theorem above using a well-known
theorem of functional analysis, namely the Eberlein--\v{S}mulian
Theorem (Fact~\ref{Eberlin-Smulian} below). Also, one can provide
a local version of Shelah's theorem: A formula $\phi$ is stable
for the theory $T$ iff the conditions (i),(ii) above hold for
$\phi$.  We will compare shortly the above observations with  the
Eberlein--\v{S}mulian Theorem.

\subsection{Examples}
To clarify the results, we  build some
examples. First, we give a model $M$ and a formula $\phi(x,y)$
such that $\phi$ has $OP$ and $NIP$ in $M$, and $Th(M)$ has $SOP$. 
This example is not interesting in itself but it is a step
towards an example with interesting properties.

\begin{Example}  \label{example1} Let $A=\{a_i:i<\omega\}$ and $B=\{b_i:i<\omega\}$.
We define a binary relation $R(x,y)$ on $D=(A\cup B)\times (A\cup
B)$ as follows:

\noindent (1) $R(a_i,a_j)$ holds iff $i<j$,

\noindent  (2) For each $k<\omega$, we define:

  ~(2--k)~~   for any $i\leq k$, $R(a_{2^k+i},b_k)$ holds iff $i$ is even, and
for any $j<2^k$ or $j>2^k+k$, $\neg R(a_j,b_k)$ holds.

\noindent (3) For any other $(c,d)\in D$, $R(c,d)$ does not hold.

\medskip\noindent
(Note that (1) says that $R(x,y)$ has the order in $M=A\cup B$. It
is easy to verify that  the formula $R(x,y)$ is $NIP$ in $M$.)

\medskip\noindent
Moreover, $Th(M)$ has  $SOP$. Indeed, notice that $\neg R(a_i,{\cal U})\subsetneqq \neg R(a_{j},{\cal U})$ for all odd numbers $i<j$.
\end{Example}

In the following, we give a model $N$ and a formula $\phi(x,y)$
such that $\phi$ has $NIP$ and $OP$ in $N$, and moreover $Th(N)$
has $IP$ and $\phi$ has $NSOP$ for $Th(N)$.

\begin{Example} \label{example2} Let $A=\{a_i:i<\omega\}$ and for any infinite subsequence $I$ of $\omega$, let $B_I=\{b_i^I:i<\omega\}$.
We define a binary relation $R(x,y)$ on $D=(A\cup \bigcup_I
B_I)\times (A\cup \bigcup_I B_I)$ as follows:

\noindent (1) $R(a_i,a_j)$ holds iff $i<j$,

\noindent  (2) For each infinite subset $I$ of $\omega$
and each $k<\omega$, the condition (2--k) in the above example
holds for $A=\{a_i:i\in I\}$ and $B=B_I$.

\noindent (3) For any other $(c,d)\in D$, $R(c,d)$ does not hold.

\medskip\noindent
Now, it is easy to verify that  the formula $R(x,y)$ is $NIP$ in
$N=A\cup \bigcup_I B_I$ but it has $OP$ in $N$. Also, (2)
guarantees that the complete theory of this structure has $IP$ (see
Proposition~\ref{NIP=DBSC}(iii)). But, by Lemma~\ref{bounded
variation}, one can show that its theory does not have $SOP$. In
fact, the type of $SOP$ (for any formula) is not consistent with
$Th(N)$. 
Indeed, notice that for any natural number $N$, there is some
natural number $m$ such that there is \textbf{no}  any subsequence
$c_1,\ldots,c_m$ of $(a_i)$ such that for each $i_1<\cdots<i_N\leq
m$, $\models\psi(c_{i_1},\ldots,c_{i_N})$, where
$\psi(x_1,\ldots,x_N)$ is the formula in Theorem~\ref{strong NIP
in a set}(i) with $\phi(x,y)=R(x,y)$ (or any other formula).

\medskip\noindent
 This example confirms that there is
a formula $\phi$  $NSOP$ for a  theory and a sequence $(a_i)$ such
that the sequence $(\phi(a_i,y):i<\omega)$ pointwise converges to
a non-continuous function. This statement contrasts with the
theory of Random Graph (see Example~\ref{example3} below).
\end{Example}

\section{Dividing lines in model theory and  Baire class~1 functions}
 This part is mainly expository but is (in our view) very illuminating.
We point out some parallels between  model theoretic dividing
lines for first order theories and   subclasses of Baire~1
functions, and propose a  new thesis. For this, we recall some
notions and the following well-known theorem of functional
analysis.

 If $X$ is a
topological space then $C(X)$ denotes the space of bounded
continuous functions on $X$. A subset $A\subseteq C(X)$ is {\em
relatively  weakly (pointwise) compact} if it has compact closure
in the weak (pointwise) topology on $C(X)$. Notice that for a
compact space $X$, a subset $A$ of $C(X)$ is weakly compact if and only if
it is norm-bounded and pointwise compact
(cf. \cite[Theorem~462E(ii)]{Fremlin4}).

\begin{Fact}[Eberlein--\v{S}mulian Theorem]
\label{Eberlin-Smulian} Let $X$ be a compact Hausdorff space and
 $A$ a norm-bounded subset of  $C(X)$. Then for the topology of pointwise convergence the following are equivalent:
\begin{itemize}
  \item [(1)]   $A$  is   relatively compact in $C(X)$.
  \item [(2)] The following two properties hold:
   \begin{itemize}
           \item [(i)]($RSC$) $A$ is relatively sequentially compact in ${\Bbb R}^X$, and 
           \item [(ii)]($SCP$)  $A$ has the sequential completeness property. 
   \end{itemize}
\end{itemize}
\end{Fact}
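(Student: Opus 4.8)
The plan is to work throughout with the topology of pointwise convergence, which is legitimate because $A$ is norm-bounded and $X$ is compact: by the cited result of Fremlin the weak and pointwise topologies agree on norm-bounded subsets of $C(X)$ and on their closures. So ``$A$ relatively compact in $C(X)$'' may be read as ``the pointwise closure $K=cl_{\mathbb R^X}(A)$ is compact and contained in $C(X)$,'' and I will verify (1)~$\Rightarrow$~(2) and (2)~$\Rightarrow$~(1) in this language.

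For (1)~$\Rightarrow$~(2), condition (ii) is immediate: any pointwise limit of a convergent sequence from $A$ lies in $K\subseteq C(X)$ and is therefore continuous. The work is in (i). Given a sequence $(f_n)$ in $A$, I would collapse $X$ along this countable family: the map $q\colon X\to\mathbb R^\omega$, $q(x)=(f_n(x))_n$, is continuous, so $X_0:=q(X)$ is compact metrizable and $q\colon X\to X_0$ is a quotient map. Each $f_n$ factors as $f_n=\tilde f_n\circ q$ with $\tilde f_n\in C(X_0)$ the $n$-th coordinate projection. Because $q$ is a quotient map, a function on $X_0$ is continuous iff its composite with $q$ is; combined with the relative compactness of $A$ this shows that every function in the pointwise closure of $\{\tilde f_n\}$ is continuous, so this closure lies in $C(X_0)$ and, sitting inside a compact cube, is compact. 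Now $X_0$ is separable, and evaluation at a countable dense set embeds the (compact) pointwise closure of $\{\tilde f_n\}$ homeomorphically into $\mathbb R^\omega$; hence that closure is metrizable, and therefore sequentially compact. Extracting a pointwise-convergent subsequence of $(\tilde f_n)$ and composing with $q$ yields a pointwise-convergent subsequence of $(f_n)$, which is (i).

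For (2)~$\Rightarrow$~(1), boundedness of $A$ by some $M$ places $K=cl_{\mathbb R^X}(A)$ inside the Tychonoff-compact cube $[-M,M]^X$, so $K$ is automatically compact and the only real content is that $K\subseteq C(X)$. My plan is to verify Grothendieck's iterated-limit criterion and then invoke the classical theorem that this criterion forces relative weak compactness. Concretely, given sequences $(f_n)\subseteq A$ and $(x_m)\subseteq X$ for which both iterated limits $\alpha=\lim_n\lim_m f_n(x_m)$ and $\beta=\lim_m\lim_n f_n(x_m)$ exist, I would use (i) to pass to a subsequence $f_{n_k}\to h$ pointwise, with $h\in C(X)$ by (ii). Since an existing limit agrees with every subsequential limit, $\lim_n f_n(x_m)=h(x_m)$, so $\beta=\lim_m h(x_m)$; choosing a cluster point $x^*$ of $(x_m)$ in the compact space $X$, continuity of $h$ and of each $f_{n_k}$ gives $\beta=h(x^*)$ and $\alpha=\lim_k f_{n_k}(x^*)=h(x^*)$, whence $\alpha=\beta$.

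The step I expect to be the genuine obstacle is the last implication: that the iterated-limit criterion, which only constrains \emph{sequences}, actually certifies that the full net-theoretic closure $K$ consists of continuous functions. This is exactly the angelic behaviour of $C(X)$ in the pointwise topology for compact $X$, i.e.\ Grothendieck's theorem, and it is the part I would cite as classical (it is available from \cite{Fremlin4}, already used here for the weak/pointwise identification) rather than reprove. Replacing nets by sequences is the heart of the matter and cannot be bypassed by the elementary metrization argument used for (i), where the countability of a single sequence was essential; every other step above is elementary once that theorem is granted.
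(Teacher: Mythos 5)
Your proof is correct, but it takes a genuinely different route from the paper's. The paper does not prove this Fact at all: its ``Explanation'' consists of citing Whitley \cite{W}, whose main theorem is the Banach-space Eberlein--\v{S}mulian theorem (relative weak compactness $\Leftrightarrow$ relative weak sequential compactness), together with two translations --- on norm-bounded subsets of $C(X)$ the weak and pointwise topologies coincide (Proposition~462E of \cite{Fremlin4}), and Whitley's condition $B$ (every sequence has a subsequence converging to an element of $C(X)$) unravels exactly into the conjunction of (i) and (ii). You instead stay inside the pointwise topology throughout: you give a genuine, self-contained proof of (1)~$\Rightarrow$~(2) via the countable factorization $q\colon X\to X_0\subseteq\mathbb{R}^\omega$ (quotient map, metrizable compact image, hence a metrizable compact closure of $\{\tilde f_n\}$ and sequential compactness), and for (2)~$\Rightarrow$~(1) you verify Grothendieck's iterated-limit criterion from (i)+(ii) --- your cluster-point argument is sound, as is the use of the fact that an existing limit agrees with every subsequential limit --- and then invoke Grothendieck's theorem that this sequential criterion forces the full pointwise closure to consist of continuous functions. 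Your own assessment of where the depth sits is accurate: that last step (angelicity of $C_p(X)$ for compact $X$) is a classical theorem of comparable weight to the statement being proved, so your argument, like the paper's, ultimately rests on one outsourced classical result; the difference is which one. What your route buys: it avoids the weak topology and Banach-space duality entirely, it proves one full implication by elementary means, and --- pleasantly --- the ingredient you cite is precisely the Eberlein--Grothendieck criterion that the paper itself records as Fact~\ref{EG}, so your proof makes visible the relationship between Grothendieck's theorem and the Eberlein--\v{S}mulian theorem that the paper's diagram in Section~3 is designed to highlight. What the paper's route buys: with two cheap observations it inherits the statement from Whitley's elementary proof, and it directly justifies reading condition (1) as weak compactness, which is how the Fact is used later.
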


\noindent {\em Explanation.}   See \cite{W} for a proof of the
Eberlein--\v{S}mulian theorem.  Recall that, $A$ is relatively sequentially compact in ${\Bbb R}^X$ if every sequence of $A$ has a convergent subsequence in ${\Bbb R}^X$, and $A$ has the sequential completeness property if  the limit of every convergent sequence of $A$ is continuous.  (2) is precisely  the condition $B$
in the main theorem of \cite{W}. Indeed, each sequence  contains
a  subsequence converging to an element of $C(X)$ if and only if
(i)  each sequence has a convergent subsequence in ${\Bbb R}^X$,
and (ii)  the limit of every convergent sequence is continuous.
Also, (1)  is the condition $A$ in \cite{W}.

 \begin{Remark}  \label{Eberlein-Shelah}  Recall from \cite{Ros}  (or Proposition~\ref{NIP=DBSC} above) that $NIP$ implies 2.(i) in the Eberlein--\v{S}mulian Theorem. By Proposition~4.6 of \cite{K3} (or Remark~\ref{SCP-NSOP} below),
  if for every countable set $A$ of the monster model and every formula $\phi$
the condition 2.(ii) holds, then the theory is $NSOP$. Notice
that, by Proposition~\ref{NSOP=bounded variation} (or
Remark~\ref{DBSC1}) and  Proposition~\ref{NIP=DBSC}, the converses
do not hold. Recall from \cite{K3} that 2.(ii) is called the weak
sequential completeness property (short $SCP$), and 2.(i) is
called the relative sequential compactness (short $RSC$). Notice
that relative compactness of $A$ corresponds to stability, by a
criterion due to Eberlein and Grothendieck (Fact~\ref{EG}). Now,
we can complete the diagram presented in \cite{K3}:

\medskip

~~~~~~~~~~~~~~~~~~~~~~~~~~~~ {\scriptsize Shelah}

~~ Stable ~~~~~~~~~~~~~~~~~ $\Longleftrightarrow$ ~~~~~~~~~~~~~
$NIP$ ~~~~~~~ $\&$ ~~~~~ $NSOP$

\bigskip

~~~~~ $\Updownarrow$ {\scriptsize Eberlein--Grothendieck}
~~~~~~~~~~~~~~~~~~~~~ $\Downarrow$
 {\scriptsize
 \ \ \ \ \ \ \ \ \ \ \ }
  ~~~~~~~~~~~~ $\Uparrow $

\medskip

~~~~~~~~~~~~~~~~~~~~~~~~ {\scriptsize Eberlein--\v{S}mulian}

 Weak Compactness ~~~~ $\Longleftrightarrow$
~~~~~~~~~~~~~ RSC ~~~~~~~ $\&$ ~~~~~~ $SCP$

\bigskip

\medskip

We will shortly prove that $NIP$ is equivalent to $RSC$ under
compactness, and $SCP$ and $NSOP$ are not equivalent. We  make a
point that $NIP$ together with  many conditions correspond to stability, so of course
there is  no reason to expect that all notions agree.
\end{Remark}


\subsection*{A thesis}

 In the Eberlein--\v{S}mulian Theorem, notice that (ii) is the
weakest topological property such that (i) and (ii) imply
relative compactness. This leads to the following definition.

\begin{Definition}  \label{Baire 1}  Let $T$ be a complete $L$-theory. We say
that  $T$ has

\medskip\noindent (i)  the  relative sequential compactness property (short $RSC$) if

\medskip
 $(RSC)$ \ \ \ for any formula  $\phi(x,y)$ and any infinite sequence $(a_i:i<\omega)$, there is a subsequence
$(a_{j_i}:i<\omega)$  such that for any parameter $b$ there is an
eventual truth value of  $(\phi(a_{j_i},b):i<\omega)$.

\medskip\noindent (ii)  the  sequential completeness property (short $SCP$) if

\medskip
 $(SCP)$ \ \ \ for any formula  $\phi(x,y)$ and any infinite sequence $(a_i:i<\omega)$, \textbf{if}
for every $b$ in the monster model there is an eventual truth
value of the sequence $(\phi(a_i,b):i<\omega)$, \textbf{then}
there is no infinite sequence $(b_j:j<\omega)$ such that
$\phi(a_i,b_j)$ holds iff $i<j$.

\end{Definition}

\begin{Remark} \label{SCP-NSOP}  (i) Every stable theory has the  $SCP$.

\noindent (ii) A theory is $NSOP$ if it has the $SCP$.

\noindent (iii) A theory is $NIP$ if and only if it has $RSC$.

\noindent (iv) A theory is stable if and only if it is $NIP$ and
has the  $SCP$.
\end{Remark}
\begin{proof} (i): Immedaite. 

(ii): Suppose that there are sequences $(a_i),(b_j)$ and formula
$\phi(x,y)$ such that the conditions of Theorem~\ref{strong NIP in
a set} hold. (Equivalently, the theory is $SOP$.) Then
$\phi(a_i,y)$ converges to a function $f$ which is not continuous.
(See Fact~\ref{EG}.) So, the $SCP$ fails.

(iii): This is the  equivalence (i)~$\Leftrightarrow$~(ii) of
Proposition~\ref{NIP=DBSC}. (Note that in {\em function theory}
the condition (iv) of Proposition~\ref{NIP=DBSC} (equivalently
$NIP$) {\em strictly} implies $RSC$, but their equivalence in
model theory is due to compactness theorem.)

(iv): By (ii) above, $SCP$ implies $NSOP$. So, $NIP$ and $SCP$
imply stability, by Shelah's theorem. (One can give a proof using
the Eberlein--\v{S}mulian Theorem and Fact~\ref{EG}.)
 The converse is evident.
\end{proof}

\begin{Example} \label{example3} (i) The theory of Random Graph has the  $SCP$.
Indeed, for any formula $\phi(x,y)$, either  $\phi(x,y)$ is stable, or
there is  no infinite
sequence $(a_i:i<\omega)$ such that for any $b$ in the monster
model there is an eventual truth  value of the sequence
$(\phi(a_i,b):i<\omega)$. Recall that the theory of Random Graph has quantifier elimination. Therefore, one can easily check that atomic formulas are either stable, or satisfy  the second alternative. By quantifier elimination, this holds for every formula.

\noindent (ii) The theory in Example~\ref{example2} is $NSOP$ but
it does not have the $SCP$.
\end{Example}

\noindent Note that in Lemma~\ref{bounded variation} we did not
give a converse to (ii)~$\Rightarrow$~(iii). This suggests the
following definition: A complete theory $T$ has the $DBSC$ if and
only if for any formula  $\phi(x,y)$ and any infinite sequence
$(a_i:i<\omega)$, \textbf{if}
 the sequence $(\phi(a_i,y):i<\omega)$ converges to a function which is
$DBSC$, \textbf{then} there is no infinite sequence
$(b_j:j<\omega)$ such that $\phi(a_i,b_j)$ holds iff $i<j$.
Clearly, if  a theory is $DBSC$ then it is $NSOP$. Now we want to
continue this process to create a hierarchy  of theories. Let
$\mathcal C$ be some subclass of Baire~1 functions, containing
$DBSC$. We say that a theory $T$  is (or has) $\mathcal C$ if

\medskip
  \ \ \ for any formula  $\phi(x,y)$ and any infinite sequence $(a_i:i<\omega)$, \textbf{if}
 the sequence $(\phi(a_i,y):i<\omega)$ converges to a function which is
$\mathcal C$, \textbf{then} there is no infinite sequence
$(b_j:j<\omega)$ such that $\phi(a_i,b_j)$ holds iff $i<j$.
\medskip

Now we can give other Shelah-like theorems.

\begin{Proposition}  \label{Shelah-like}  Let $T$ be complete theory and $\cal C$ as
above. Then $T$ is stable if and only if it is both $NIP$ and
$\cal C$.
\end{Proposition}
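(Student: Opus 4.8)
The plan is to derive both directions from Shelah's theorem (Corollary~\ref{Shelah revisited}), using the observation that the hypothesis ``$T$ is $\mathcal{C}$'' sits between ``$T$ is $DBSC$'' and $NSOP$. The single conceptual point I would isolate first is a monotonicity remark: since $DBSC(X)\subseteq\mathcal{C}$, whenever a sequence $(\phi(a_i,y))$ converges pointwise to a $DBSC$ function it in particular converges to a $\mathcal{C}$ function, so the antecedent of the ``$\mathcal{C}$'' implication is \emph{broader} than that of the ``$DBSC$'' implication. Consequently the property ``$T$ is $\mathcal{C}$'' is logically at least as strong as ``$T$ is $DBSC$''. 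Getting this inclusion-versus-implication direction right is essentially the only place where one can slip, so I would state it explicitly; everything else is bookkeeping on top of results already proved.

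For the direction ($\Leftarrow$), assume $T$ is $NIP$ and $\mathcal{C}$. First I would show that ``$T$ is $\mathcal{C}$'' implies $NSOP$. By the monotonicity above it suffices to check that ``$T$ is $DBSC$'' implies $NSOP$. If $T$ had $SOP$, then Proposition~\ref{NSOP=bounded variation} would furnish a formula $\phi$, sequences $(a_i),(b_j)$, a number $N$ and a set $E$ satisfying conditions (i),(ii) of Theorem~\ref{strong NIP in a set}; condition (i) is exactly hypothesis (i) of Lemma~\ref{bounded variation} applied to $f_n=\phi(a_n,\cdot)$, so by the implication (i)$\Rightarrow$(iii) of that lemma $(\phi(a_i,y))$ converges to a $DBSC$ function, while condition (ii) exhibits the forbidden order pattern $(b_j)$. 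This contradicts ``$T$ is $DBSC$'', so $T$ is $NSOP$; then $NIP$ together with $NSOP$ yields stability by Shelah's theorem.

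For the direction ($\Rightarrow$), assume $T$ is stable. Then $T$ is $NIP$ (standard, and also the content of the $NIP$ clause of Corollary~\ref{Shelah revisited}). To see that $T$ is $\mathcal{C}$ I would use that stability is precisely $NOP$: no formula $\phi$ and sequences $(a_i),(b_j)$ satisfy $\phi(a_i,b_j)\Leftrightarrow i<j$ (equivalently, by Fact~\ref{EG}, every pointwise limit of instances of $\phi$ is continuous). Hence the \emph{conclusion} of the ``$T$ is $\mathcal{C}$'' implication already holds for every $\phi$ and every $(a_i)$, so the implication is satisfied regardless of its hypothesis and $T$ is $\mathcal{C}$. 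I do not expect a genuine obstacle here: the statement is a repackaging of ``stable $\Leftrightarrow NIP+NSOP$'', once one notes that $C(X)\subseteq DBSC\subseteq\mathcal{C}\subseteq B_1(X)$ forces ``$T$ is $\mathcal{C}$'' to land between $NSOP$ and the (automatic, under stability) continuity of all limits.
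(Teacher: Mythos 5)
Your proof is correct and takes essentially the same route as the paper: the paper's own proof just notes that the theory-property $\mathcal C$ implies the theory-property $DBSC$ and hence $NSOP$ (your monotonicity observation combined with Lemma~\ref{bounded variation} and Proposition~\ref{NSOP=bounded variation}), then invokes Shelah's theorem as in Remark~\ref{SCP-NSOP}(iv), with the converse declared evident. You have simply spelled out the details that the paper leaves implicit.
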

\begin{proof}
 The proof is
similar to the argument of Remark~\ref{SCP-NSOP}(iv). (Note that
$\cal C$ implies $DBSC$ and so $NSOP$.)
\end{proof}

\medskip

 \noindent
Notice that the $SCP$ ($DBSC$) asserts that for any formula
$\phi(x,y)$, every Baire~1 ($DBSC$) function in the closure of
$\phi(a,y)$'s is continuous. Set $Baire~1(\phi)=\{f:$ there
exists  $(a_i)$ such that $\phi(a_i,y)$ converges pointwise to
$f\}$, $DBSC(\phi)=\{f:$ there exists  $(a_i)$ such that
$\phi(a_i,y)$ converges pointwise to $f$ and $f$ is $DBSC \}$ and
$C(\phi)=\{f:$ there exists $(a_i)$ such that $\phi(a_i,y)$
converges uniformly to $f\}$ as Remark~\ref{DBSC1} above. (Notice the difference
between $DBSC(\phi)$ and $D(\phi)$ in Remark~\ref{DBSC1}.)
 By these notations, we say
that $T$ has Baire~1 property (equivalently the $SCP$)  iff for
any formula $\phi$, $Baire~1(\phi)\setminus C(\phi)=\emptyset$.
Similarly, we say that $T$  is (or has) $DBSC$ iff for any formula
$\phi$, $DBSC(\phi)\setminus C(\phi)=\emptyset$. We can do this
process for each subclass $\mathcal C\supseteq DBSC$ of Baire~1
functions in the sense of \cite{KL}; a theory is (or has)
$\mathcal C$ iff for any formula $\phi$, $\mathcal C
(\phi)\setminus C(\phi)=\emptyset$.


\medskip\noindent
\textbf{ Notation:}  In the rest of this part, the symbol $\frak
P$ (generated by  frak\{P\}) denotes an arbitrary model theoretic
property such that $\frak P$ implies  $NSOP$ (for
example, $NSOP_n$ or simplicity), and $\mathcal C$ denotes an
arbitrary subclass of Baire~1 functions on compact Polish spaces,
containing $DBSC$. For a theory $T$ and a formula $\phi(x,y)$ we
set $\mathcal C(\phi)=\{f:$ there exist $(a_i)$ such that
$\phi(a_i,y)$ converges pointwise to $f$ and $f\in\mathcal C\}$,
and $C(\phi)$ as Remark~\ref{DBSC1} above.
 For a model theoretic
property $\frak P$, if there is a subclass $\mathcal C$ of Baire~1
functions such that any theory $T$ is $\frak P$ if and only if
for any formula $\phi$, $\mathcal C(\phi)\setminus
C(\phi)=\emptyset$, then we write
  $\frak P=\frak P_\mathcal C$.
Similarly, for a subclass $\mathcal C$, if there is a model
theoretic property $\frak P$ such that any theory $T$ has $\frak
P$ if and only if for any formula $\phi$, $\mathcal
C(\phi)\setminus C(\phi)=\emptyset$, then we write $\mathcal
C=\mathcal C_\frak P$.

\medskip

 Recall that $DBSC$ implies $NSOP$, and stability (or $NOP$) corresponds to
the class of continuous functions (short Continuous). With these
notations, $\frak P_{DBSC}\subset_{_?} NSOP$, $\frak
P_{\text{Continuous}}=NOP$ and $\mathcal C_{NSOP}\subset_{_?}
DBSC$, $\mathcal C_{NOP}=\text{Continuous}$. Now, one can suggest
the following diagram:

$$NOP=\frak P_{\text{Continuous}}\varsubsetneqq\cdots\varsubsetneqq \ ?_{\text{Baire~1}}\varsubsetneqq\cdots\varsubsetneqq\frak P\subsetneqq\dots\varsubsetneqq\frak P_{\text{$DBSC$}}\subset_{_?}NSOP$$
$$\text{Baire 1}=\mathcal C_{?}\supsetneqq \cdots\supsetneqq\mathcal C\supsetneqq\cdots\supsetneqq \text{$DBSC$} \ _{_?}\supset\mathcal C_{NSOP}\supsetneqq\mathcal C_{NOP}=\text{Continuous}$$

\bigskip

There are so many questions: for a model theoretic property
$\frak P$, what is the right class $\mathcal C_{\frak P}$? And
converse, for a subclass $\mathcal C$, what is the right model
theoretic property $\frak P_\mathcal C$?

Let us discuss possible answers. There are four possibilities.
First: there are correspondences between some model theoretic
classes and subclasses of Baire~1 functions. (See Question~\ref{question} below.) Second: some model
theoretic dividing lines imply some subclasses of Baire~1
functions, or  vice versa. Third: some model theoretic classes
are divided by some subclasses of Baire~1 functions, or vice
versa. Fourth: there are connections between subclasses of Baire~1
functions and classes in   Keisler's order. Everything that is the
case is good.

\begin{Question} \label{question}
 Are there any interesting relations between subclasses of Baire~1 functions and notions like $NSOP_n$?
\end{Question}

Finally, we point out that the notion $NSOP$ says that  \textbf{if}  any sequence of the form
$\phi(a_n,y)$ converges with a  `special rate', \textbf{then} the
limit is continuous. One can expect other properties  also have
the same nature. If that is the case, the special rate for $NSOP$
is  {\em stronger}  than the special rate for $\frak P$.   The
above points strongly inspire us to believe that
 model theoretic  classification
is correlated with  a classification of Baire~class~1 functions
similar to the work of Kechris and Louveau in  \cite{KL}.

\bigskip\noindent
 {\bf Acknowledgements.}
I am very much indebted to Professor John T. Baldwin  for his
kindness and his helpful comments.  I want to thank Pierre Simon
for his interest in reading a preliminary version of this article
and for his comments.  I thank the anonymous referee for his/her detailed
suggestions and corrections; they helped to improve significantly the exposition of this paper.

 I would like to thank the Institute for Basic Sciences (IPM), Tehran, Iran. Research partially supported by IPM grant  no  99030117.

\vspace{10mm}

\end{document}